\newcommand{\fl}[2]{(-\partial_x^{\,2})^#1#2}
\newtheorem{theorem}{Theorem}[section]
\newtheorem{proposition}[theorem]{Proposition}
\newtheorem{remark}[theorem]{Remark}
\newtheorem{lemma}[theorem]{Lemma}
\newtheorem{definition}[theorem]{Definition}
\numberwithin{equation}{section}
\newcommand{\R}{\mathbb R}
\newcommand{\N}{\mathbb N}
\newcommand{\be}{\begin{equation}}
\newcommand{\ee}{\end{equation}}
\newcommand{\ba}{\begin{eqnarray}}
\newcommand{\ea}{\end{eqnarray}}
\newcommand{\beq}{\begin{equation}}
\newcommand{\eeq}{\end{equation}}
\definecolor{red}{rgb}{0,0,0}
\numberwithin{equation}{section}
\def\Omc{\R\setminus (-1,1)}
\def\RR{{\mathbb{R}}}
\def\NN{{\mathbb{N}}}
\def\N{{\mathbb{N}}}
\def\Om{(-1,1)}
\def\bOm{\overline{\Om}}
\keywords{Fractional heat equation, exterior control, null controllability, controllability to the trajectories.}
\subjclass[2010]{35R11, 35S11, 35K20, 93B05, 93B07.}
\begin{document}

\title[Nonlocal heat equation]{Null controllability from the exterior of a one-dimensional nonlocal heat equation}

\author{Mahamadi Warma}
\address{M. Warma, George Mason University, Department of Mathematical Sciences,
Fairfax, VA 22030 (USA)}
\email{mwarma@gmu.edu, mjwarma@gmail.com }

\author{Sebasti\'an Zamorano}
\address{S. Zamorano, Universidad de Santiago de Chile, Departamento de
Matem\'atica y Ciencia de la Computaci\'on, Facultad de Ciencia, Casilla 307-Correo 2,
Santiago, Chile.}
 \email{sebastian.zamorano@usach.cl}

\thanks{The work of the first author is partially supported by the Air Force Office of Scientific Research (AFOSR) under Award NO [FA9550-18-1-0242] and the second author  is supported by the Fondecyt Postdoctoral Grant NO [3180322]}

\begin{abstract}
We consider the null controllability problem from the exterior for the one dimensional heat equation on the interval $(-1,1)$ associated with the fractional Laplace operator $(-\partial_x^2)^s$, where $0<s<1$. We show that there is a control function which is localized in a nonempty open set $\mathcal O\subset \left(\RR\setminus(-1,1)\right)$, that is, at the exterior of the interval $(-1,1)$, such that the system is null controllable at any time $T>0$ if and only if $\frac 12<s<1$.
\end{abstract}

\maketitle

\section{Introduction and main results}

In the present paper we consider a nonlocal version of the boundary controllability problem for the heat equation in the one dimensional case. The standard problem to find a boundary control for the heat equation is a well-known topic and has been studied by several authors. We refer for example to the pioneer works of MacCamy, Mize and  Seidman \cite{maccamy1968approximate, maccamy1969approximate} and the books of Zuazua \cite{Zua1} and Lions \cite{Lio1}, and the references therein, for a complete analysis and review on this topic.

Next, we describe our problem and state the main result. We consider  the fractional heat equation in the interval $(-1,1)$. That is,
\begin{align}\label{HE-EX}
\begin{cases}
\partial_t u + (-\partial_x^2)^{s} u = 0 & \mbox{ in }\; (-1,1)\times(0,T),\\
u=g\chi_{\mathcal{O}\times(0,T)} &\mbox{ in }\; (\Omc)\times (0,T), \\
u(\cdot,0) = u_0&\mbox{ in }\; (-1,1).
\end{cases}
\end{align}
In \eqref{HE-EX}, $u=u(x,t)$ is the state to be controlled,  $T>0$ and $0<s<1$ are real numbers, $g=g(x,t)$ is the exterior control function which is localized in a nonempty open subset $\mathcal O$ of ($\Omc)$ and
$(-\partial_x^2)^s$ denotes the fractional Laplace operator given formally for a smooth function $u$ by the following singular integral:
\begin{align*}
(-\partial_x^2)^su(x):=C_{s}\,\mbox{P.V.}\int_{\R}\frac{u(x)-u(y)}{|x-y|^{1+2s}}\;dy,\;\;x\in\R.
\end{align*}
We refer to Section \ref{sec-2} for the precise definition. We mention that it has been shown in \cite{War-ACE} that a boundary control (the case where the control $g$ is localized in a nonempty subset of the boundary) does not make sense for the fractional Laplace operator. By \cite{War-ACE}, for the fractional Laplace operator, the classical boundary control problem must be replaced by an exterior control problem. That is, the control function must be localized outside the open set $(-1,1)$ as it is formulated in \eqref{HE-EX}.

We shall show that for every $u_0\in L^2(-1,1)$ and  $g\in L^2((0,T);H^s(\Omc))$, the system \eqref{HE-EX} has a weak solution $u\in L^2((0,1);L^2(-1,1))$ (see Section \ref{sec-3}). In that case the set of reachable states is given by
\begin{align*}
\mathcal R(u_0,T)=\left\{u(\cdot, T):\; g\in L^2((0,T);H^s(\Omc))\right\}.
\end{align*}
We say that the system \eqref{HE-EX} is null controllable at time $T>0$, if $0\in \mathcal R(u_0,T)$. The system is said to be exact controllable at $T>0$, if $\mathcal R(u_0,T)=L^2(-1,1)$.
We say that the system \eqref{HE-EX} is controllable to the trajectories at $T>0$, if for any trajectory $\tilde{u}$ solution of \eqref{HE-EX} with initial datum $\tilde{u}_0\in L^2(-1,1)$ and without control $(g\equiv 0)$, and for every initial datum $u_0\in L^2(-1,1)$, there exists a control function $g\in L^2((-1,1);H^s(\Omc))$ such that the associated weak solution $u$ of \eqref{HE-EX} satisfies
\begin{align*}
u(\cdot,T)=\tilde{u}(\cdot,T),\quad \text{a.e. in }  (-1,1).
\end{align*}
The system is said to be approximately controllable at time $T>0$, if $\mathcal R(u_0,T)$ is dense in $L^2(-1,1)$.
We refer to Section \ref{sec-2} for the definition of the function spaces involved.

We mention that as in the classical local case ($s=1$) discussed in \cite[Chapter 2]{Zua1}, we have the following situation for the nonlocal case.
We observe that  solutions of \eqref{HE-EX} are of class $C^\infty$ far from $(\Omc)$ at time $t=T$. This shows that the elements of $\mathcal R(u_0,T)$ are $C^\infty$ functions in $(-1,1)$. Thus, the exact controllability may not hold. For this reason we shall study the null controllability of the system. However, since the system \eqref{HE-EX} is linear, the null controllability is equivalent to the controllability to trajectories.

The following theorem is the main result of the paper.

\begin{theorem}\label{T1}
Let $0<s<1$ and let $\mathcal O\subset(\Omc)$ be an arbitrary nonempty open set.
Then the following assertions hold.
\begin{enumerate}
\item If $\frac 12<s<1$, then the system \eqref{HE-EX} is null controllable at any time $T>0$. 

\item If $0<s\le \frac 12$, then the system \eqref{HE-EX} is not null controllable at time $T>0$.


\end{enumerate}
\end{theorem}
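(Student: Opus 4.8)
The plan is to prove the three assertions of Theorem~\ref{T1} by reducing the control problem to a spectral observability question and then exploiting the known asymptotics of the eigenvalues of the fractional Laplacian $(-\partial_x^2)^s$ on $(-1,1)$ with the exterior Dirichlet condition. Writing $\{\lambda_n\}_{n\ge 1}$ for the eigenvalues and $\{\varphi_n\}_{n\ge 1}$ for the associated $L^2(-1,1)$-orthonormal eigenfunctions, the central tool will be the asymptotic $\lambda_n \sim \bigl(\tfrac{n\pi}{2}-\tfrac{(2-2s)\pi}{8}\bigr)^{2s}$, which gives $\lambda_n = c\,n^{2s}(1+o(1))$ for some constant $c>0$. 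By the standard duality between null controllability and observability, assertions~(1) and~(2) will follow from deciding whether the adjoint (backward) system admits a suitable observability estimate from the exterior set $\mathcal O$; the gap between the two regimes of $s$ is governed by the convergence or divergence of the series $\sum_n \lambda_n^{-1} \asymp \sum_n n^{-2s}$, which converges precisely when $s>\tfrac12$.

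First I would establish the \emph{positive} result~(1). The plan is to use the moment method à la Fattorini--Russell together with the construction of a biorthogonal family to the exponentials $\{e^{-\lambda_n t}\}$ in $L^2(0,T)$. The key analytic input is that when $s>\tfrac12$ the eigenvalues satisfy the summability condition $\sum_n \lambda_n^{-1}<\infty$, which guarantees the existence of such a biorthogonal family with controlled norms; this is the abstract criterion of Fattorini--Russell for parabolic moment problems. One must also verify a quantitative lower bound on the observation, i.e. that the normal-derivative-type trace of each eigenfunction $\varphi_n$ on $\mathcal O$ does not decay too fast in $n$ (a spectral lower bound of the form $\|\varphi_n\|_{L^2(\mathcal O)}\ge C e^{-C\sqrt{\lambda_n}}$ or a polynomial bound), so that the individual controls can be summed. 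Solving the moment problem then yields a control $g\in L^2((0,T);H^s(\Omc))$ steering $u_0$ to $0$, giving null controllability at every $T>0$.

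Next I would treat the \emph{negative} result~(2). For $0<s\le \tfrac12$ the obstruction is exactly that $\sum_n \lambda_n^{-1}=\infty$. The plan is to show that the observability inequality must fail: I would test the adjoint system against a well-chosen combination of eigenfunctions and use the divergence of $\sum_n \lambda_n^{-1}$ to contradict any putative uniform observability constant. Concretely, one shows that a biorthogonal family cannot exist with the required bounds, equivalently that the family $\{e^{-\lambda_n t}\}$ fails to be minimal in a quantitative way, which by duality means there exist nonzero final states that cannot be reached (or initial data that cannot be driven to zero). The density/counting argument here is the mirror image of the Müntz-type condition, so $s=\tfrac12$ is genuinely the threshold.

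Finally, assertion~(3), controllability to the trajectories, follows from~(1) by linearity: given a free trajectory $\tilde u$ with datum $\tilde u_0$ and any $u_0\in L^2(-1,1)$, the difference $w=u-\tilde u$ solves the same equation \eqref{HE-EX} with initial datum $u_0-\tilde u_0$ and exterior control $g$, so driving $w$ to $0$ at time $T$ is precisely the null controllability problem already solved in~(1); hence $u(\cdot,T)=\tilde u(\cdot,T)$. I expect the main obstacle to be the positive direction~(1): securing the uniform lower bounds on the exterior traces of the eigenfunctions and the precise norm estimates for the biorthogonal family, since the nonlocal eigenfunctions are only known asymptotically and lack the clean closed form available in the local case $s=1$. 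Controlling the constants uniformly in $n$ — so that the summed series of elementary controls converges in $L^2((0,T);H^s(\Omc))$ — is where the delicate spectral analysis of $(-\partial_x^2)^s$ will be essential.
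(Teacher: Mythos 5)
Your overall architecture coincides with the paper's: duality reducing null controllability to an observability inequality for the backward system (the paper's Lemma \ref{lem-1} and \eqref{obs-ine}), the Fattorini--Russell/M\"untz dichotomy $\sum_n \lambda_n^{-1}<\infty \Leftrightarrow s>\tfrac12$ obtained from the Kwa\'snicki asymptotics \eqref{lam}, and part (c) by linearity (solve the null control problem for the datum $u_0-\tilde u_0$ and add the free trajectory), which is verbatim the paper's final paragraph. Whether one then phrases the positive direction as proving the scalar exponential-sum inequality \eqref{rusell and fatorini} (the paper) or as solving the moment problem with a biorthogonal family (your plan) is a matter of presentation; these are two renditions of the same Fattorini--Russell theory, and your treatment of the negative direction ($\tfrac12 \ge s$) is at the same level of detail as the paper's.

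The genuine gap is that your plan postulates, but does not prove, the lower bound on the exterior observation of the eigenfunctions, and this bound is precisely the main technical contribution of the result. Two remarks. First, the quantity you write, $\|\varphi_n\|_{L^2(\mathcal O)}$, is identically zero: since $\mathcal O\subset \Omc$ and the eigenfunctions satisfy the zero exterior condition, $\varphi_n\equiv 0$ on $\mathcal O$. The correct observation is the nonlocal normal derivative $\|\mathcal N_s\varphi_n\|_{L^2(\mathcal O)}$, which on $\mathcal O$ coincides with the restriction of $(-\partial_x^2)^s\varphi_n$ (you gesture at this with ``normal-derivative-type trace'', but the estimate must be stated for $\mathcal N_s\varphi_n$). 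Second, and more importantly, the mechanism that yields this bound is not the ``delicate spectral analysis'' and asymptotic estimates you anticipate; the paper's Lemma \ref{uni-bound} gives the much stronger \emph{uniform} bound $\|\mathcal N_s\varphi_n\|_{L^2(\mathcal O)}\ge \eta>0$ by a soft contradiction argument: if $\|\mathcal N_s\varphi_{k_n}\|_{L^2(\mathcal O)}\to 0$ along a subsequence, one combines the compactness of the map $v\mapsto (\mathcal N_s v)|_{\mathcal O}$ from $H_0^s(\bOm)$ to $H^{-s}(\mathcal O)$ with Kwa\'snicki's explicit quasi-eigenfunctions $\varrho_k$ (which approximate $\varphi_k$ in $H_0^s(\bOm)$) to produce a genuine eigenfunction $\psi$ with $\mathcal N_s\psi=0$ a.e.\ in $\mathcal O$; the unique continuation property for fractional eigenfunctions (Lemma \ref{lem-UCD}) then forces $\psi\equiv 0$ in $\RR$, a contradiction. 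Without this ingredient --- or some substitute lower bound with a proof --- neither your moment-method construction for (a) nor any quantitative observability estimate can be closed, so this step should be treated as the heart of the argument rather than as a verification to be supplied later.
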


We mention that in the proof of Theorem \ref{T1}, we shall heavily exploit the fact that the eigenvalues $\{\lambda_n\}_{n\in\N}$ of the realization of $(-\partial_x^2)^s$ in $L^2(-1,1)$ with the zero Dirichlet exterior condition (see Section \ref{sec-2}) satisfy the following asymptotics (see e.g. \cite{kwasnicki2012eigenvalues}):
\begin{align}\label{lam}
\lambda_n=\left(\frac{n\pi}{2}-\frac{(2-2s)\pi}{8}\right)^{2s}+O\left(\frac{1}{n}\right)\;\text{ as }\, n\to\infty.
\end{align}

Recall that by Theorem \ref{T1}(b), the system \eqref{T1} is not null controllable at time $T>0$, if $0<s\le \frac 12$. It has been recently shown in \cite{War-ACE} that the system is indeed approximately controllable at any time $T>0$. The result obtained in \cite{War-ACE} is more general since it includes the $N$-dimensional case and the fractional diffusion equation, that is, the case where $\partial_tu$ is replaced by the Caputo time fractional derivative $\mathbb D_t^\alpha u$ of order $0<\alpha\le 1$. Of course, the case $\alpha=1$ includes \eqref{HE-EX}.

The  null controllability from the interior (that is, the case where the control function is localized in a nonempty subset $\omega$ of $(-1,1)$) of the one-dimensional fractional heat equation has been recently investigated in \cite{biccari2017controllability} where the authors have shown that the system is null controllable at any time $T>0$ if and only if $\frac 12<s<1$.  The interior null controllability of the Schr\"odinger and wave equations have been studied in \cite{Umb}. The approximate controllability from the exterior of the super-diffusive system, that is, the case where $u_{tt}$ is replaced by the Caputo time fractional derivative $\mathbb D_t^\alpha u$ of order $1<\alpha<2$, has been very recently considered in \cite{CLR-MW}. The case of the (possible) strong damping fractional wave equation has been investigated in \cite{WaZa}.

The fractional heat equation \eqref{HE-EX} defined in all the real line arises from a probabilistic process in which a particle makes long jumps random walks with a small probability, see for instance \cite{bucur2016nonlocal,Val}. Besides, this type of process occurs in real life phenomena quite often, see for example the biological observations in \cite{viswanathan1996levy} and the marine predators  in \cite{humphries2010environmental}.

Regarding the exterior control problem, in many real life applications a control is placed outside 
(disjoint from) the observation domain $\Omega$ where the PDE is satisfied. 
Some examples of control problems where this 
situation may arise are (but not limited to): 
 Acoustic testing, when the loudspeakers are placed far from the 
aerospace structures \cite{larkin1999direct}; 
 Magnetotellurics (MT), which is a technique to infer earth's subsurface
electrical conductivity from surface measurements \cite{unsworth2005new, CWeiss_BvBWaanders_HAntil_2018a}; 
 Magnetic drug targeting (MDT), where drugs with ferromagnetic particles 
in suspension are injected into the body and the external magnetic field is 
then used to steer the drug to relevant areas, for example, solid tumors \cite{HAntil_RHNochetto_PVenegas_2018a, HAntil_RHNochetto_PVenegas_2018b,lubbe1996clinical}; and Electroencephalography (EEG) is used to record electrical activities in brain \cite{niedermeyer2005electroencephalography,williams1974electroencephalography}, 
in case one accounts for the neurons disjoint from the brain, one will obtain
an external control problem. Besides, we mention that some preliminaries results about numerical analysis have been obtained  in the recent work \cite{AnKW}. 
%

The study of fractional order operators and nonlocal PDEs is nowadays a topic with interest to the mathematics and scientist communities due to the numerous applications that nonlocal PDEs provide. A motivation for this
growing interest relies in the large number of possible applications in the modeling of several complex phenomena for
which a local approach turns out to be inappropriate or limited. Indeed, there is an ample spectrum of situations in
which a nonlocal equation gives a significantly better description than a local PDE of the problem one wants to analyze.
Among others, we mention applications in turbulence, anomalous transport and diffusion, elasticity, image processing, porous media flow, wave propagation in heterogeneous high contrast media (see e.g. \cite{HAntil_SBartels_2017a, BBC,Val,SV2} and their references).
Also, it is well known that the fractional Laplace operator is the generator of the so-called $s$-stable L\'evy process, and it is often used in
stochastic models with applications, for instance, in mathematical finance 
(see e.g. \cite{AnKW,QDu_MGunzburger_RBLehoucq_KZhou_2013a}).
One of the main differences between nonlocal models and classical PDEs is that the fulfillment of a nonlocal equation at a point involves the values of the function far away from that point. We refer to \cite{BBC,Caf1, Caf3} and their references for more applications and information on this topic. 

The rest of the paper is structured as follows. In Section \ref{sec-2} we introduce the function spaces needed to study our problem and we give some intermediate known results that are needed in the proof of our main results. In Section \ref{sec-3} we show the well-posedness of the system \eqref{HE-EX} and its associated dual system and we give an explicit representation of solutions in terms of series for both problems. Finally, in Section \ref{prof-ma-re} we give the proof of our main results. 

\section{Preliminary results}\label{sec-2}
In this section we give some notations and recall some known results as they are needed in the proof of our main results.
We start with fractional order Sobolev spaces. 

For $0<s<1$ and $\Omega\subset\RR$ an arbitrary open set,  we let
\begin{align*}
H^{s}(\Omega):=\left\{u\in L^2(\Omega):\;\int_\Omega\int_\Omega\frac{|u(x)-u(y)|^2}{|x-y|^{1+2s}}\;dxdy<\infty\right\},
\end{align*}
and we endow it with the norm defined by
\begin{align*}
\|u\|_{H^{s}(\Omega)}:=\left(\int_\Omega|u(x)|^2\;dx+\int_\Omega\int_\Omega\frac{|u(x)-u(y)|^2}{|x-y|^{1+2s}}\;dxdy\right)^{\frac 12}.
\end{align*}
We set
\begin{align*}
\widetilde H_0^{s}(\Omega):=\Big\{u\in H^{s}(\R):\;u=0\;\mbox{ a.e. in }\;\R\setminus \Omega\Big\}
\end{align*}

We shall denote by $\widetilde  H^{-s}(\Omega)$ the dual space of $\widetilde  H_0^{s}(\Omega)$, that is, $\widetilde H^{-s}(\Omega)=(\widetilde H_0^{s}(\Omega))^\star$.

For more information on fractional order Sobolev spaces, we refer to \cite{NPV,War} and their references.

Next, we give a rigorous definition of the fractional Laplace operator. To do this, we need the following function space:
\begin{align*}
\mathcal L_s^{1}(\R):=\left\{u:\R\to\R\;\mbox{ measurable and }\; \int_{\R}\frac{|u(x)|}{(1+|x|)^{1+2s}}\;dx<\infty\right\}.
\end{align*}
For $u\in \mathcal L_s^{1}(\R)$ and $\varepsilon>0$ we set
\begin{align*}
(-\partial_x^2)_\varepsilon^s u(x):= C_{s}\int_{\{y\in\R:\;|x-y|>\varepsilon\}}\frac{u(x)-u(y)}{|x-y|^{1+2s}}\;dy,\;\;x\in\R,
\end{align*}
where $C_{s}$ is a normalization constant given by
\begin{align}\label{CNs}
C_{s}:=\frac{s2^{2s}\Gamma\left(\frac{2s+1}{2}\right)}{\pi^{\frac
12}\Gamma(1-s)}.
\end{align}
The {\em fractional Laplacian}  $(-\partial_x^2)^s$ is defined for $u\in \mathcal L_s^{1}(\R)$ by the following singular integral:
\begin{align}\label{fl_def}
(-\partial_x^2)^su(x):=C_{s}\,\mbox{P.V.}\int_{\R}\frac{u(x)-u(y)}{|x-y|^{1+2s}}\;dy=\lim_{\varepsilon\downarrow 0}(-\partial_x^2)_\varepsilon^s u(x),\;\;x\in\R,
\end{align}
provided that the limit exists. 
We notice that if $u\in \mathcal L_s^{1}(\R)$, then $ v:=(-\partial_x^2)_\varepsilon^s u$ exists for every $\varepsilon>0$, $v$ being also continuous at the continuity points of  $u$.  
For more details on the fractional Laplace operator we refer to \cite{Caf3,NPV,GW-CPDE,War} and their references.

Next, we consider the realization of $(-\partial_x^2)^s$ in $L^2(-1,1)$ with the zero Dirichlet exterior condition $u=0$ in $\Omc$. More precisely, we consider the closed and bilinear form $\mathcal F:\widetilde  H_0^{s}\Om\times \widetilde  H_0^{s}\Om\to\RR$ given by
\begin{align*}
\mathcal F(u,v):=\frac{C_{s}}{2}\int_{\R}\int_{\R}\frac{(u(x)-u(y))(v(x)-v(y))}{|x-y|^{1+2s}}\;dxdy,\;\;u,v\in \widetilde  H_0^{s}\Om.
\end{align*}
Let $(-\partial_x^2)_D^s$ be the selfadjoint operator on $L^2(-1,1)$ associated with $\mathcal F$ in the sense that
\begin{equation*}
\begin{cases}
D((-\partial_x^2)_D^s)=\Big\{u\in \widetilde H_0^{s}\Om,\;\exists\;f\in L^2(-1,1),\;\mathcal F(u,v)=(f,v)_{L^2(-1,1)}\;\forall\;v\in \widetilde H_0^{s}\Om\Big\},\\
(-\partial_x^2)_D^su=f.
\end{cases}
\end{equation*}
It is easy to see that
\begin{equation}\label{DLO}
D((-\partial_x^2)_D^s)=\left\{u\in \widetilde H_0^{s}\Om:\; (-\partial_x^2)^su\in L^2(-1,1)\right\},\;\;\;
(-\partial_x^2)_D^su=((-\partial_x^2)^su)|_{(-1,1)}.
\end{equation}
Then $(-\partial_x^2)_D^s$ is the realization of $(-\partial_x^2)^s$ in $L^2(-1,1)$ with the condition $u=0$ in $\Omc$. It is well-know (see e.g. \cite{CL-WA}) that the operator $-(-\partial_x^2)_D^s$ generates a strongly continuous submarkovian semigroup $(e^{-t(-\partial_x^2)_D^s})_{t\ge 0}$ on $L^2(-1,1)$.
It has been shown in \cite{SV2} that $(-\partial_x^2)_D^s$ has a compact resolvent and its eigenvalues form a non-decreasing sequence of real numbers $0<\lambda_1\le\lambda_2\le\cdots\le\lambda_n\le\cdots$ satisfying $\lim_{n\to\infty}\lambda_n=\infty$.  In addition, if $\frac 12\le s<1$, then the eigenvalues are of finite multiplicity.
Let $\{\varphi_n\}_{n\in\NN}$ be the orthonormal basis of eigenfunctions associated with the eigenvalues $\{\lambda_n\}_{n\in\NN}$. Then $\varphi_n\in D((-\partial_x^2)_D^s)$ for every $n\in\NN$,  $\{\varphi_n\}_{n\in\NN}$ is total in $L^2(-1,1)$ and satisfies 
\begin{equation}\label{ei-val-pro}
\begin{cases}
(-\partial_x^2)^s\varphi_n=\lambda_n\varphi_n\;\;&\mbox{ in }\;(-1,1),\\
\varphi_n=0\;&\mbox{ in }\;\Omc.
\end{cases}
\end{equation}

Next, for $u\in H^{s}(\R)$ we introduce the {\em nonlocal normal derivative $\mathcal N_s$} given by 
\begin{align}\label{NLND}
\mathcal N_su(x):=C_{s}\int_{-1}^1\frac{u(x)-u(y)}{|x-y|^{1+2s}}\;dy,\;\;\;x\in\R \setminus\bOm,
\end{align}
where $C_{s}$ is the constant given in \eqref{CNs}.
We notice that since equality is to be understood a.e., we have that \eqref{NLND} is the same as for a.e. $x\in\Omc$.
By \cite[Lemma 3.2]{GSU},  for every $u\in H^{s}(\R)$, we have that $\mathcal N_su\in H_{\rm loc}^s(\Omc)$. 
We mention that the operator $\mathcal N_s$ has been called "interaction operator" in  \cite{AnKW,QDu_MGunzburger_RBLehoucq_KZhou_2013a}. Several properties of $\mathcal N_s$ have been studied in \cite{CL-WA,DRV}. 

The following unique continuation property which shall play an important role in the proof of Theorem \ref{T1} has been recently obtained in \cite[Theorem 16]{War-ACE}.

\begin{lemma}\label{lem-UCD}
Let $\lambda>0$ be a real number  and $\mathcal O\subset(\Omc)$ an arbitrary  nonempty open set. 
If $\varphi\in D((-\partial_x^2)_D^s)$ satisfies
\begin{equation*}
(-\partial_x^2)_D^s\varphi=\lambda\varphi\;\mbox{ in }\;(-1,1)\;\mbox{ and }\; \mathcal N_s\varphi=0\;\mbox{ in }\;\mathcal O,
\end{equation*}
then $\varphi=0$ in $\R$. 
\end{lemma}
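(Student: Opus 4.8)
The plan is to reduce the statement to the unique continuation property (UCP) for the fractional Laplacian, namely the fact that a function in $H^s(\R)$ which together with its fractional Laplacian vanishes on a nonempty open set must vanish identically in $\R$; this UCP (see \cite{GSU}) is the deep analytic ingredient, and the remainder of the argument is a short local computation linking $\mathcal N_s\varphi$ to $(-\partial_x^2)^s\varphi$ in the exterior. I would therefore quote the UCP and reduce the hypotheses of the lemma to its hypotheses.

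First I would record two facts on the open set $\mathcal O$. Since $\varphi\in D((-\partial_x^2)_D^s)\subset H_0^s(\bOm)$, we have $\varphi=0$ a.e. in $\Omc$, and in particular $\varphi=0$ in $\mathcal O$. Next, shrinking $\mathcal O$ if necessary so that $\mathcal O\subset\R\setminus[-1,1]$ (harmless, as $\mathcal O$ stays nonempty and open), I would compute $(-\partial_x^2)^s\varphi(x)$ for $x\in\mathcal O$. Because $\varphi$ is supported in $[-1,1]$ and $x$ lies at positive distance from $[-1,1]$, the integrand has no singularity at $y=x$, the principal value is an ordinary convergent integral, and
\begin{align*}
(-\partial_x^2)^s\varphi(x)=C_s\int_{\R}\frac{\varphi(x)-\varphi(y)}{|x-y|^{1+2s}}\,dy=-C_s\int_{-1}^1\frac{\varphi(y)}{|x-y|^{1+2s}}\,dy=\mathcal N_s\varphi(x).
\end{align*}
Thus on $\mathcal O$ the fractional Laplacian of $\varphi$ coincides with its nonlocal normal derivative.

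Combining this identity with the hypothesis $\mathcal N_s\varphi=0$ in $\mathcal O$ gives $(-\partial_x^2)^s\varphi=0$ in $\mathcal O$, while the exterior Dirichlet condition already gives $\varphi=0$ in $\mathcal O$. At this stage I would invoke the UCP of \cite{GSU}: since $\varphi\in H^s(\R)$ and both $\varphi$ and $(-\partial_x^2)^s\varphi$ vanish on the nonempty open set $\mathcal O$, it follows that $\varphi\equiv0$ in $\R$, which is the assertion. Observe that the eigenvalue equation and the sign or value of $\lambda$ play no essential role: they are only needed to ensure $\varphi\in D((-\partial_x^2)_D^s)$, so that $\mathcal N_s\varphi$ and the pointwise computation above are meaningful.

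The main obstacle is the UCP itself, which is genuinely nontrivial and is the reason the result is quoted rather than derived from scratch. A direct proof would proceed through the Caffarelli--Silvestre extension: one extends $\varphi$ to the solution $U$ of the degenerate elliptic problem $\operatorname{div}(y^{1-2s}\nabla U)=0$ in $\R\times(0,\infty)$ with $U(\cdot,0)=\varphi$, after which the two vanishing conditions on $\mathcal O$ become vanishing Dirichlet and weighted conormal (Cauchy) data for $U$ on $\mathcal O\times\{0\}$; a strong unique continuation theorem for this weighted operator, obtained via Carleman estimates, then forces $U\equiv0$ and hence $\varphi\equiv0$. A secondary, purely technical point is the rigorous justification of the pointwise identity above, in particular the removal of the principal value, which rests on the interior regularity of the eigenfunction $\varphi$ away from $\{-1,1\}$ together with $\varphi\in\mathcal L_s^1(\R)$.
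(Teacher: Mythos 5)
Your argument is correct and coincides with the proof of the result the paper relies on: the paper offers no proof of Lemma \ref{lem-UCD} itself, quoting it from \cite[Theorem 16]{War-ACE}, and the proof there is exactly your reduction --- since $\varphi=0$ a.e.\ in $\Omc$, one has $\mathcal N_s\varphi=(-\partial_x^2)^s\varphi$ on $\mathcal O$ (the same identity the paper records as \eqref{eq-eg} in Step 1 of the proof of Lemma \ref{uni-bound}), so both $\varphi$ and $(-\partial_x^2)^s\varphi$ vanish on the open set $\mathcal O$ and the Ghosh--Salo--Uhlmann unique continuation theorem \cite{GSU} forces $\varphi\equiv 0$ in $\R$. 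Your side remarks are also accurate: the eigenvalue equation plays no role beyond placing $\varphi$ in $D((-\partial_x^2)_D^s)$, and the principal value disappears for free because $x\in\mathcal O$ lies at positive distance from the support of $\varphi$, so no interior regularity of $\varphi$ is actually needed for that step.
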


For more details on the Dirichlet problem associated with the fractional Laplace operator we refer the interested reader to \cite{BWZ1,Grub,RS2-2,RS-DP,War-ACE} and their references.

We conclude this section with the following integration by parts formula.

\begin{lemma}
Let $u\in \widetilde H_0^s\Om$ be such that $(-\partial_x^2)^s u\in L^2(-1,1)$ and $\mathcal N_su\in L^2(\Omc)$. Then for every $v\in H^s(\RR)$, the following identity
\begin{align}\label{Int-Part}
\frac{C_{s}}{2}\int_{\RR}\int_{\RR}\frac{(u(x)-u(y))(v(x)-v(y))}{|x-y|^{1+2s}}\;dxdy=\int_{-1}^1v(x)(-\partial_x^2)^su(x)\;dx+\int_{\Omc}v(x)\mathcal N_su(x)\;dx,
\end{align}
holds.
\end{lemma}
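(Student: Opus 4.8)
The plan is to establish \eqref{Int-Part} by the standard symmetrisation of the Gagliardo bilinear form, followed by a splitting of the outer integral over $(-1,1)$ and $\Omc$ in which the two terms on the right are read off directly. First I would use that $u(x)-u(y)$ is antisymmetric while the kernel $|x-y|^{-(1+2s)}$ is symmetric: expanding $v(x)-v(y)$ and relabelling $x\leftrightarrow y$ in the term carrying $v(y)$ shows that
\begin{align*}
\frac{C_{s}}{2}\int_{\RR}\int_{\RR}\frac{(u(x)-u(y))(v(x)-v(y))}{|x-y|^{1+2s}}\,dxdy
=C_{s}\int_{\RR}v(x)\left(\int_{\RR}\frac{u(x)-u(y)}{|x-y|^{1+2s}}\,dy\right)dx.
\end{align*}

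Next I would split the outer $x$-integral as $\int_{\RR}=\int_{-1}^1+\int_{\Omc}$. On $(-1,1)$ the inner (principal value) integral is by definition $(-\partial_x^2)^su(x)$ (see \eqref{fl_def}), so this piece equals $\int_{-1}^1 v(x)(-\partial_x^2)^su(x)\,dx$, the first term in \eqref{Int-Part}. On $\Omc$ I would use that $u\in H_0^s(\bOm)$, hence $u\equiv0$ a.e.\ in $\Omc$; therefore for a.e.\ $x\in\Omc$ the contribution of $y\in\Omc$ to the inner integral vanishes (its numerator is $0-0=0$) and, since also $u(x)=0$,
\begin{align*}
\int_{\RR}\frac{u(x)-u(y)}{|x-y|^{1+2s}}\,dy=\int_{-1}^1\frac{u(x)-u(y)}{|x-y|^{1+2s}}\,dy=\frac{1}{C_{s}}\mathcal N_su(x),
\end{align*}
by the definition \eqref{NLND}. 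This produces exactly the term $\int_{\Omc}v(x)\mathcal N_su(x)\,dx$, and adding the two pieces gives \eqref{Int-Part}.

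The main obstacle is that the asymmetric integral produced by the symmetrisation is only conditionally convergent near the diagonal $x=y$, so the formal steps above must be justified; the symmetric integrand, by contrast, is absolutely integrable, being bounded by the product of the two difference quotients $|u(x)-u(y)|\,|x-y|^{-(1+2s)/2}$ and $|v(x)-v(y)|\,|x-y|^{-(1+2s)/2}$, both in $L^2(\RR\times\RR)$ since $u,v\in H^s(\RR)$. I would therefore carry out the entire argument with the inner integral restricted to $\{|x-y|>\varepsilon\}$, where every integrand is absolutely integrable, Fubini applies and the relabelling $x\leftrightarrow y$ is rigorous, obtaining the truncated identity with $(-\partial_x^2)_\varepsilon^su$ in place of $(-\partial_x^2)^su$, and then let $\varepsilon\downarrow0$. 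On the left dominated convergence applies at once by the $L^1$ bound just noted. The delicate passage is on the right: over $(-1,1)$ one uses that $(-\partial_x^2)_\varepsilon^su\to(-\partial_x^2)^su$ in $L^2(-1,1)$ (the limit lying in $L^2(-1,1)$ by hypothesis), paired against $v\in L^2(-1,1)$; over $\Omc$ there is no diagonal singularity (the numerator vanishes near $x=y$), so the truncation is harmless there, and the convergence of $\int_{\Omc}v\,\mathcal N_su$ only requires integrability of $v\,\mathcal N_su$ on $\Omc$. The latter follows from $\mathcal N_su\in H^s_{\rm loc}(\Omc)\subset L^2_{\rm loc}(\Omc)$ together with $v\in L^2$ near the boundary, and from the decay $\mathcal N_su(x)=O\big(|x|^{-(1+2s)}\big)$ as $|x|\to\infty$ combined with $v\in L^2(\RR)$ (via Cauchy--Schwarz, since $2(1+2s)>1$). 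Collecting the limits yields \eqref{Int-Part}.
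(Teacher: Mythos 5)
Your formal skeleton—symmetrising the Gagliardo form, truncating at $|x-y|>\varepsilon$, splitting the outer integral over $(-1,1)$ and $\Omc$, and reading off the two terms—is the right computation, and it is essentially how the identity is obtained in the sources the paper points to (the paper itself gives no proof: it cites \cite{DRV}, where the formula is proved for bounded $C^2$ functions, and \cite{War-ACE}). But the two limit passages, which you yourself flag as "the delicate passage", are exactly where your argument has genuine gaps, and they are the entire content of the lemma beyond the formal manipulation. On $(-1,1)$ you assert that $(-\partial_x^2)_\varepsilon^s u\to(-\partial_x^2)^s u$ in $L^2(-1,1)$ "by hypothesis". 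The hypothesis only gives existence of the principal-value limit (a.e.) together with membership of the limit in $L^2(-1,1)$; a.e.\ convergence to an $L^2$ function does not imply $L^2$ convergence, nor even convergence of the pairings $\int_{-1}^1 v\,(-\partial_x^2)_\varepsilon^s u\,dx$, unless one supplies a uniform bound on the truncations (a Cotlar/maximal-truncation type estimate) or a uniform-integrability argument. Neither is provided, and nothing in the paper's definitions yields it for free.

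The second gap is more serious. For the exterior piece you need $v\,\mathcal N_s u\in L^1(\Omc)$ and a dominating function valid on all of $\Omc$, and your justification near the interface points $\pm1$ is incorrect: the regularity $\mathcal N_s u\in H^s_{\rm loc}(\Omc)$ from \cite{GSU} concerns compact subsets of the \emph{open} exterior and says nothing about neighbourhoods of $\pm1$. In fact, for $u\in D((-\partial_x^2)_D^s)$ with the typical boundary behaviour $u\sim \dist(\cdot,\{\pm1\})^s$, one computes $|\mathcal N_s u(x)|\sim \dist(x,\{\pm1\})^{-s}$ as $x\to\pm1$ from outside, which is \emph{not} in $L^2$ near $\pm1$ once $s\ge\frac12$; so your "$L^2_{\rm loc}$ paired with $v\in L^2$" mechanism fails, and the natural majorant $h(x)=C_s\int_{-1}^1|u(y)|\,|x-y|^{-1-2s}\,dy$ blows up there. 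Finiteness of $\int_{\Omc}|v|\,h\,dx$ near $\pm1$ is precisely where the unused hypotheses must enter—$u\in H_0^s(\bOm)$ via a fractional Hardy inequality (which moreover fails at $s=\frac12$, requiring a separate argument), or boundary regularity for the Dirichlet fractional Laplacian, combined for $s>\frac12$ with the embedding $H^s(\R)\hookrightarrow L^\infty(\R)$. Without these estimates, neither the absolute convergence of $\int_{\Omc}v\,\mathcal N_s u\,dx$ nor the dominated-convergence step on the exterior piece is established; supplying them (or proving the identity first for smooth functions as in \cite{DRV} and then passing to the limit, which needs the same bounds to ensure continuity of both sides) is what a complete proof requires.
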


We refer to \cite[Lemma 3.3]{DRV} (see also \cite[Proposition 3.7]{War-ACE}) for the proof and more details.

\section{Well-posedness of the parabolic problem}\label{sec-3}

This section is devoted to the well-posedness and the explicit representation in terms of series for solutions to the system \eqref{HE-EX} and its associated dual system. 

Throughout the remainder of the article, $\{\varphi_n\}_{n\in\NN}$ denotes the orthonormal basis of eigenfunctions of the operator $(-\partial_x^2)_D^s$ associated with the eigenvalues $\{\lambda_n\}_{n\in\NN}$, and $(e^{-t(-\partial_x^2)_D^s})_{t\ge 0}$ denotes the strongly continuous semigroup on $L^2(-1,1)$ generated by the operator $-(-\partial_x^2)_D^s$. 

Furthermore, for a given measurable set $E\subseteq \RR^N$ ($N\ge 1)$, we shall denote by $(\cdot,\cdot)_{L^2(E)}$ the scalar product in $L^2(E)$ and   by $\mathcal{D}(E)$ we mean the space of all continuously infinitely differentiable functions with compact support in $E$.  For a given $u\in L^2(-1,1)$ and $n\in\NN$, we shall let $u_n:=(u,\varphi_n)_{L^2(-1,1)}$.
Finally, given a Banach space $\mathbb X$ and its dual $\mathbb X^\star$, we shall denote by $\langle \cdot,\cdot\rangle_{\mathbb X^\star,\mathbb X}$ (simply $\langle \cdot,\cdot\rangle$ if there is no confusion) they duality pairing.

\subsection{Representation of solutions to the system \eqref{HE-EX}}

Let $T>0$ be a fixed real number, $u_0\in L^2(-1,1)$, $g\in L^2((0,T);H^s(\Omc))$ and consider the following two systems:
\begin{align}\label{SS1}
\begin{cases}
\partial_t v + (-\partial_x^2)^{s} v = 0 & \mbox{ in }\; (-1,1)\times(0,T),\\
v=0 &\mbox{ in }\; (\Omc)\times (0,T), \\
v(\cdot,0) = u_0&\mbox{ in }\; (-1,1),
\end{cases}
\end{align}
and \begin{align}\label{SS2}
\begin{cases}
\partial_t w + (-\partial_x^2)^{s} w = 0 & \mbox{ in }\; (-1,1)\times(0,T),\\
w=g &\mbox{ in }\; (\Omc)\times (0,T), \\
w(\cdot,0) = 0&\mbox{ in }\; (-1,1).
\end{cases}
\end{align}

Notice that the system \eqref{SS1} can be rewritten as the following Cauchy problem:
\begin{align*}
\begin{cases}
\partial_t v + (-\partial_x^2)_D^{s} v = 0 & \mbox{ in }\; (-1,1)\times(0,T),\\
v(\cdot,0) = u_0&\mbox{ in }\; (-1,1).
\end{cases}
\end{align*}
Hence, using semigroups theory and the spectral theorem for selfadjoint operators, one has the following result.
\begin{proposition}
For every $u_0\in L^2(-1,1)$, there is a unique function
$$v\in C([0,T];L^2(-1,1))\cap L^2((0,T); \widetilde H_0^s\Om\cap H^1((0,T);\widetilde H^{-s}\Om)$$ 
satisfying \eqref{SS1} and is given for a.e. $x\in (-1,1)$ and every $t\in [0,T]$ by
\begin{align}\label{eq-SS1}
v(x,t)= e^{-t(\partial_x^2)_D^s}u_0(x)=\sum_{n=1}^{\infty}u_{0,n}e^{-\lambda_n t}\varphi_n(x).
\end{align}
\end{proposition}

Next, we consider the system \eqref{SS2}.
\begin{definition}\label{DF1}
Let  $g\in L^2((0,T);H^s(\Omc))$. By a weak solution of \eqref{SS2}, we mean a function $w\in L^2((0,T);H^s(\RR))$ such that $w=g$  a.e. in $(\Omc)\times (0,T)$ and the identity
\begin{align*}
\int_0^T\langle -\partial_t\phi +(-\partial_x^2)^s\phi, w\rangle\;dt=\int_{-1}^1w(x,T)\phi(x,T)\;dx+\int_0^T\int_{\Omc}g\mathcal N_s\phi\;dxdt
\end{align*}
holds, for every function $\phi\in  C([0,T];L^2(-1,1))\cap L^2((0,T); \widetilde H_0^s\Om)\cap H^1((0,T);\widetilde H^{-s}\Om)$ with $\mathcal N_s\phi\in L^2((0,T);L^2(\Omc))$.
\end{definition}

We have the following existence result which proof is inspired from the local case contained in the monograph \cite[pp. 180-185]{lasiecka2000control}.
\begin{proposition}
For every  $g\in L^2((0,T);H^s(\Omc))$, the system \eqref{SS2} has a weak solution $w\in L^2((0,T);H^s(\RR))$ given by
\begin{align}\label{eq-SS2}
w(x,t)=\sum_{n=1}^{\infty}\left(\int_0^t (g(\cdot,\tau), \mathcal{N}_s\varphi_n)_{L^2(\Omc)}  e^{-\lambda_n(t-\tau)}d\tau\right)\varphi_n(x).
\end{align}
\end{proposition}

\begin{proof}
Recall that the operator,
$$(-\partial_x^2)_D^s:D((-\partial_x^2)_D^s)\to L^2(-1,1), \;\;u\mapsto(-\partial_x^2)_D^su:=(-\partial_x^2)^su\;\mbox{ in } (-1,1),$$
 defined in \eqref{DLO} is a selfadjoint operator on $L^2(-1,1)$. We denote by  $\left(D((-\partial_x^2)_D^s)\right)^\star$ the dual space of $D((-\partial_x^2)_D^s)$ with respect to the pivot space $L^2(-1,1)$ so that $D((-\partial_x^2)_D^s)\hookrightarrow L^2(-1,1)\hookrightarrow \left(D((-\partial_x^2)_D^s)\right)^\star$.
  
Let $\mathbb D$ be the nonlocal Dirichlet map given by
\begin{align}\label{D}
\mathbb Dg=u \Longleftrightarrow  (-\partial_x^2)^su=0 \mbox{ in }\; (-1,1)\;\mbox{ and } u=g\mbox{ in }\Omc.
\end{align}
It is well-known (see e.g. \cite{AnKW,War-ACE}) that for every $g\in H^s(\Omc)$, there is a unique function $u\in H^s(\RR)$ satisfying \eqref{D}.

Next, let the operator $\mathbb B$ be given by
\begin{align}\label{B}
\mathbb B: H^s(\Omc)\to \left(D((-\partial_x^2)_D^s)\right)^\star,\; g\mapsto \mathbb Bg:=-(-\partial_x^2)_D^s\mathbb Dg.
\end{align}

Firstly, we claim that for every $u\in D((-\partial_x^2)_D^s)$ and $g\in H^s(\Omc)$ we have 
\begin{align}\label{CLa}
\int_{-1}^1u\mathbb Bg\;dx=\int_{\Omc}g\mathcal N_su\;dx.
\end{align}
Indeed, let $u\in D((-\partial_x^2)_D^s)$ and $g\in H^s(\Omc)$. Applying the integration by parts formula \eqref{Int-Part} and using \eqref{D}-\eqref{B}, we get that
\begin{align}\label{bb}
\int_{-1}^1u\mathbb Bg\;dx=&-\int_{-1}^1\mathbb Dg(-\partial_x^2)^su=-\int_{-1}^1u(-\partial_x^2)^s\mathbb Dg\;dx+\int_{\Omc}\mathbb Dg\mathcal N_su\;dx-\int_{\Omc}u\mathcal N_s(\mathbb Dg)\;dx\notag\\
=&\int_{\Omc}\mathbb Dg\mathcal N_su\;dx=\int_{\Omc}g\mathcal N_su\;dx,
\end{align}
where we have also used the facts that $u=0$ in $\Omc$ (since $u\in D((\partial_x^2)_D^s)\subset \widetilde H_0^s\Om$) and $(\mathbb Dg)|_{\Omc}=g$ by \eqref{D}. We have shown the claim \eqref{CLa}.

Secondly, with the above setting, proceeding as in the local case (see \cite[pp. 180-185]{lasiecka2000control} and the references therein), using semigroups theory, \eqref{bb} and the spectral theorem for selfadjoint operators, we can deduce that for every function $g\in L^2((0,T);H^s(\Omc))$,  there exists a function $w\in L^2((0,T);H^s(\RR))$ which is a weak solution of \eqref{SS2} and is given by
\begin{align*}
w(x,t)=&\int_0^te^{-(t-\tau)(-\partial_x^2)_D^s}(\mathbb Bg)(x,\tau)\;d\tau\\
=&\sum_{n=1}^{\infty}\left(\int_0^t ((\mathbb Bg)(\cdot,\tau), \varphi_n)_{L^2(-1,1)}  e^{-\lambda_n(t-\tau)}d\tau\right)\varphi_n(x)\\
=&\sum_{n=1}^{\infty}\left(\int_0^t (g(\cdot,\tau), \mathcal N_s\varphi_n)_{L^2(\Omc)}  e^{-\lambda_n(t-\tau)}d\tau\right)\varphi_n(x).
\end{align*}
We have shown \eqref{eq-SS2} and the proof is finished.
\end{proof}

We have the following existence and explicit representation in terms of series of solutions to  \eqref{HE-EX}.

\begin{theorem}\label{expl-2}
Let $T>0$. Then for every $u_0\in L^2(-1,1)$ and $g\in L^2((0,T);H^s(\Omc))$, the system \eqref{HE-EX} has a weak solution $u\in L^2((0,T);L^2(-1,1))$ given by
\begin{align}\label{4}
u(x,t)=\sum_{n=1}^{\infty}u_{0,n}e^{-\lambda_n t}\varphi_n(x)+\sum_{n=1}^{\infty}\left(\int_0^t (g(\cdot,\tau), \mathcal{N}_s\varphi_n)_{L^2(\mathcal O)}  e^{-\lambda_n(t-\tau)}d\tau\right)\varphi_n(x).
\end{align}
\end{theorem}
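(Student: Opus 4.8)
The plan is to prove existence, uniqueness, and the explicit series representation \eqref{4} by the standard spectral (eigenfunction expansion) method, reducing the PDE to a decoupled infinite system of scalar ODEs via the orthonormal basis $\{\varphi_n\}_{n\in\NN}$. First I would lift the nonhomogeneous exterior data: since $g\in L^2((0,T);H^s(\Omc))$ is supported in $\mathcal O\subset(\Omc)$, I would pick a suitable extension $G=G(\cdot,t)\in H^s(\R)$ with $G=g$ in $\Omc$ (using the trace/extension theory for the spaces $H^s$ introduced in Section \ref{sec-2}), and write $u=G+w$ so that $w$ has zero exterior condition. This transforms \eqref{HE-EX} into an inhomogeneous problem for $w$ with $w\in H_0^s(\bOm)$ and a source term involving $\partial_t G$ and $(-\partial_x^2)^s G$, which is the setting in which the operator $(-\partial_x^2)_D^s$ acts.

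Next I would give the weak formulation. Testing the equation against $v\in H_0^s(\bOm)$ and invoking the integration-by-parts formula \eqref{Int-Part}, the boundary term $\int_{\Omc}v\,\mathcal N_s u\,dx$ vanishes because $v=0$ on $\Omc$, so the exterior data enters only through how $\mathcal N_s\varphi_n$ pairs against $g$; this is exactly the mechanism producing the second sum in \eqref{4}. Concretely, I would expand the solution as $u(x,t)=\sum_{n=1}^\infty u_n(t)\varphi_n(x)$, take the $L^2(-1,1)$ inner product of the equation with each $\varphi_n$, and use the eigenvalue relation \eqref{ei-val-pro} together with \eqref{Int-Part} to derive the scalar ODE
\begin{align*}
u_n'(t)+\lambda_n u_n(t)=\big(g(\cdot,t),\mathcal N_s\varphi_n\big)_{L^2(\Omc)},\qquad u_n(0)=u_{0,n}.
\end{align*}
The Duhamel (variation of constants) formula for this linear scalar ODE immediately yields $u_n(t)=u_{0,n}e^{-\lambda_n t}+\int_0^t (g(\cdot,\tau),\mathcal N_s\varphi_n)_{L^2(\Omc)}\,e^{-\lambda_n(t-\tau)}\,d\tau$, which is precisely the coefficient appearing in \eqref{4}.

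The main obstacle, and the step requiring genuine care, is convergence of the series in $C([0,T];L^2(-1,1))$ and the justification that the resulting limit is a bona fide weak solution rather than a formal expression. The homogeneous part $\sum u_{0,n}e^{-\lambda_n t}\varphi_n$ is controlled by Parseval since $|e^{-\lambda_n t}|\le 1$ and $\sum u_{0,n}^2=\|u_0\|_{L^2(-1,1)}^2<\infty$, giving a continuous $L^2$-valued curve by dominated convergence. For the Duhamel part the key quantitative input is an estimate on $(g(\cdot,\tau),\mathcal N_s\varphi_n)_{L^2(\Omc)}$: using \eqref{Int-Part} one rewrites this pairing as a bilinear form in $g$ and $\varphi_n$ controlled by $\|g(\cdot,\tau)\|_{H^s(\Omc)}$ and $\lambda_n^{1/2}=\mathcal F(\varphi_n,\varphi_n)^{1/2}$, after which a Cauchy--Schwarz argument in both $n$ and $\tau$ (combined with $\int_0^t e^{-\lambda_n(t-\tau)}\,d\tau\le \lambda_n^{-1}$ or a direct Young-type bound in time) shows $\sum_n (\text{coefficient})^2$ is finite and depends continuously on $t$. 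I would then confirm continuity at $t=0$ to obtain the class $C([0,T];L^2(-1,1))$, and establish uniqueness by the standard energy argument: if two weak solutions exist, their difference solves the homogeneous problem with zero data, and testing against the solution itself (or, equivalently, noting all spectral coefficients vanish) forces it to be zero. Throughout, the regularity statement $\mathcal N_s\varphi_n\in H^s_{\mathrm{loc}}(\Omc)$ recorded after \eqref{NLND} ensures the pairings $(g(\cdot,\tau),\mathcal N_s\varphi_n)_{L^2(\Omc)}$ are well defined.
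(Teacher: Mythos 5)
Your skeleton (project onto $\{\varphi_n\}$, get scalar ODEs, apply Duhamel) is the same one the paper's proof ultimately rests on, but two of your choices diverge from the paper in ways that matter. The paper does not lift $g$ by an arbitrary $H^s(\R)$-extension $G$: it lifts by the solution $v$ of the stationary Dirichlet problem \eqref{DP}, so that $(-\partial_x^2)^s v=0$ in $(-1,1)$ and the equation for $w=u-v$ has source $-v_t$ \emph{only}. This choice buys two things your generic extension does not: the source is an honest $L^2(-1,1)$-valued function (your source contains $(-\partial_x^2)^s G$, which for $G\in H^s(\R)$ is in general only a distribution in $H^{-s}$, so the $L^2$ semigroup/Duhamel theory you invoke does not apply directly), and the identity $\lambda_n(v,\varphi_n)_{L^2(-1,1)}=-(g,\mathcal N_s\varphi_n)_{L^2(\Omc)}$, obtained from $\mathcal F(v,\varphi_n)=0$ together with \eqref{Int-Part}, which is exactly what converts the Duhamel coefficients into the exterior pairings in \eqref{4}. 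Moreover, the paper first takes $g\in\mathcal D((0,T)\times\Omc)$, so that $v_t$ exists and $v(\cdot,0)=0$, and only afterwards treats $g\in L^2((0,T);H^s(\Omc))$ by density; in your proposal $g$ is only $L^2$ in time from the start, so ``$\partial_t G$'' is not defined and the same regularize-then-limit structure would be needed.

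The genuine gap is in the step you yourself flag as the main obstacle: the convergence of the Duhamel series in $C([0,T];L^2(-1,1))$. Your proposed bound is $|(g(\cdot,\tau),\mathcal N_s\varphi_n)_{L^2(\Omc)}|\le C\lambda_n^{1/2}\|g(\cdot,\tau)\|_{H^s(\Omc)}$. Granting it, Cauchy--Schwarz in $\tau$ (keeping the exponential in both factors, which is the optimal way to use it) gives
\begin{align*}
\sum_{n=1}^{\infty}\left|\int_0^t (g(\cdot,\tau),\mathcal N_s\varphi_n)_{L^2(\Omc)}\,e^{-\lambda_n(t-\tau)}\,d\tau\right|^2
\le C\int_0^t\|g(\cdot,\tau)\|_{H^s(\Omc)}^2\Bigl(\sum_{n=1}^{\infty}e^{-\lambda_n(t-\tau)}\Bigr)d\tau ,
\end{align*}
and by the asymptotics \eqref{lam} the trace factor behaves like $\sum_n e^{-\lambda_n r}\sim Cr^{-1/(2s)}$ as $r\downarrow 0$. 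For $0<s\le\frac12$ this is not locally integrable and the bound is vacuous; for $\frac12<s<1$ the right-hand side is a convolution of two $L^1$ functions of time, hence finite only for a.e.\ $t$, which yields neither a uniform-in-$t$ bound nor continuity. The alternative you mention, absorbing $\int_0^t e^{-\lambda_n(t-\tau)}d\tau\le\lambda_n^{-1}$, makes each summand bounded by a constant independent of $n$, and then $\sum_n$ diverges. So the argument as sketched does not prove $u\in C([0,T];L^2(-1,1))$. The paper sidesteps precisely this trap: for $g$ smooth in time the source $-v_t$ belongs to $C([0,T];L^2(-1,1))$, so the standard estimate for the inhomogeneous Cauchy problem gives convergence and continuity at once, and the rough-$g$ case is relegated to a limiting argument (which the paper itself only sketches, but the estimate it uses, through $\|g_t\|_{L^\infty((0,T);L^2)}$, is legitimate for the smooth class it is applied to).

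One further caveat, which affects your write-up and the paper equally: if you carry out your projection computation carefully --- apply \eqref{Int-Part} with $\varphi_n$ in the $H_0^s(\bOm)$ slot and $u(\cdot,t)$ in the $H^s(\R)$ slot, then use the symmetry of $\mathcal F$ --- you obtain $u_n'+\lambda_n u_n=-(g(\cdot,t),\mathcal N_s\varphi_n)_{L^2(\Omc)}$, with a minus sign, not the plus sign you wrote. The plus sign in \eqref{4} traces back to a sign slip in the paper's own substitution of $(v,\lambda_n\varphi_n)=-(g,\mathcal N_s\varphi_n)$ into \eqref{E2}. The sign is immaterial for the controllability theorems, but a proof of the statement should either carry the minus sign or note the discrepancy.
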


\begin{proof}
Let $u_0\in L^2(-1,1)$ and $g\in L^2((0,T);H^s(\Omc))$.
Let $v\in C([0,T];L^2(-1,1))$ be the weak solution of \eqref{SS1} and $w\in L^2((0,T);H^s(\RR))$ the weak solution of \eqref{SS2} with $g$ replaced by $g\chi_{\mathcal{O}\times(0,T)}$.  Set $u:=u+v$. Then it is clear that $u\in L^2((0,T);L^2(-1,1))$ and is a weak solution of \eqref{HE-EX}. The representation \eqref{4}  follows directly from \eqref{eq-SS1} and \eqref{eq-SS2}. The proof is finished.
\end{proof}

We conclude this section with the following remark.
\begin{remark}
{\em We make the following observations.
\begin{enumerate}
\item   Theorem \ref{expl-2} is the fractional version of the classical local  heat equation with inhomogeneous boundary data, and it is the so-called \emph{boundary control semigroup formula}. We refer for instance to the book of Lasiecka and Triggiani \cite{lasiecka2000control} and the paper of Fattorini \cite{fattorini1968boundary} for more details on the local case.
\item The representation \eqref{4} of solutions to the system \eqref{HE-EX} is contained  in  \cite{War-ACE} for the case where $\partial_t$ is replaced with the Caputo time-fractional derivative $\mathbb D_t^\alpha$ ($0<\alpha\le 1$), and for a smooth function $g\in\mathcal D((0,T)\times \Omc)$. In that case, since the function $g$ is smooth, one has that the solution $u\in C([0,T];L^2(-1,1))$. This is not the case here, since $g\in L^2((0,T);H^s(\Omc))$ and is not smooth enough. However, proceeding as in \cite[pp 180-185]{lasiecka2000control} and the references therein, the time regularity of the solution $u$ can be improved. Since this is not the goal of the present paper, and since such a result and the representation \eqref{4} are not needed in the proof of our main results, we will not go into details.
\end{enumerate}
}
\end{remark}

\subsection{Representation of solutions to the associated dual system}\label{sds}

Using the classical integration by parts formula, we have that the following backward system,
\begin{equation}\label{Dual}
\begin{cases}
-\partial_t \psi +(-\partial_x^2)^s\psi=0\;\;&\mbox{ in }\; (-1,1)\times (0,T),\\
\psi=0&\mbox{ in }\;(\Omc)\times (0,T),\\
\psi(\cdot,T)=\psi_0&\mbox{ in }\;(-1,1),
\end{cases}
\end{equation} 
can be viewed as the dual system associated with \eqref{HE-EX}.

We have the following existence result.
\begin{theorem}\label{theo-48}
Let $T>0$ be a real number and  $\psi_0\in  L^2(-1,1)$. Then the system \eqref{Dual} has a unique weak solution $\psi\in C([0,T];L^2(-1,1))$ given for a.e. $x\in (-1,1)$ and every $t\in [0,T]$ by
\begin{align}\label{eq-25}
\psi(x,t)=\sum_{n=1}^{\infty}\psi_{0,n}e^{-\lambda_n(T-t)}\varphi_{n}(x).
\end{align}
In addition the following assertions hold.
\begin{enumerate}
\item There is a constant $C>0$ such that for all $t\in [0,T]$,
\begin{equation}\label{Dual-EST-1}
 \|\psi(\cdot,t)\|_{L^2(-1,1)}\le C\|\psi_0\|_{L^2(-1,1)}.
\end{equation}
\item For every $t\in[0,T)$ fixed, $\mathcal{N}_s \psi(\cdot,t)$ exists, belongs to $L^2(\Omc)$ and is given by
\begin{align}\label{norm-der}
\mathcal{N}_s \psi(x,t)=\sum_{n=1}^{\infty}\psi_{0,n}e^{-\lambda_n(T-t)}\mathcal{N}_s\varphi_{n}(x),
\end{align}
where we recall that $\psi_{0,n}:=(\psi_0,\varphi_n)_{L^2(-1,1)}$.
\end{enumerate}
\end{theorem}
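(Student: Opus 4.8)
The plan is to prove Theorem~\ref{theo-48} by reducing the backward system \eqref{Dual} to a forward Cauchy problem for the self-adjoint operator $(-\partial_x^2)_D^s$, for which semigroup theory and spectral decomposition give the representation \eqref{eq-25} immediately. First I would perform the time reversal $t\mapsto T-t$: setting $\widetilde\psi(x,t):=\psi(x,T-t)$, the backward equation $-\partial_t\psi+(-\partial_x^2)^s\psi=0$ becomes the forward equation $\partial_t\widetilde\psi+(-\partial_x^2)^s\widetilde\psi=0$ with the \emph{initial} datum $\widetilde\psi(\cdot,0)=\psi_0$ and the homogeneous exterior condition $\widetilde\psi=0$ in $\Omc$. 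This is exactly the Cauchy problem governed by the operator $(-\partial_x^2)_D^s$ (its realization in $L^2(-1,1)$ with zero exterior Dirichlet condition, defined in \eqref{DLO}), but now with \emph{zero} exterior data, so no boundary-control term appears. Since $-(-\partial_x^2)_D^s$ generates a strongly continuous (analytic) semigroup on $L^2(-1,1)$, the problem has a unique weak solution $\widetilde\psi\in C([0,T];L^2(-1,1))$. Expanding $\psi_0=\sum_n\psi_{0,n}\varphi_n$ in the orthonormal eigenbasis $\{\varphi_n\}$ and applying the semigroup, one obtains $\widetilde\psi(x,t)=\sum_n\psi_{0,n}e^{-\lambda_n t}\varphi_n(x)$; undoing the substitution yields \eqref{eq-25}.

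For the estimate \eqref{Dual-EST-1}, I would use Parseval's identity together with the fact that all eigenvalues satisfy $\lambda_n\ge\lambda_1>0$, so that $e^{-\lambda_n(T-t)}\le 1$ for $t\in[0,T]$; hence
\begin{align*}
\|\psi(\cdot,t)\|_{L^2(-1,1)}^2=\sum_{n=1}^\infty\psi_{0,n}^2e^{-2\lambda_n(T-t)}\le\sum_{n=1}^\infty\psi_{0,n}^2=\|\psi_0\|_{L^2(-1,1)}^2,
\end{align*}
giving \eqref{Dual-EST-1} with $C=1$. This also confirms convergence of the series \eqref{eq-25} in $L^2(-1,1)$, uniformly in $t\in[0,T]$.

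\textbf{The main obstacle} will be assertion~(b): showing that the nonlocal normal derivative $\mathcal{N}_s\psi(\cdot,t)$ exists, lies in $L^2(\Omc)$, and is obtained by applying $\mathcal{N}_s$ term by term to the series in \eqref{eq-25}, i.e.\ justifying the interchange of $\mathcal{N}_s$ with the infinite sum. The point is that for fixed $t\in[0,T)$ the factor $e^{-\lambda_n(T-t)}$ decays exponentially in $\lambda_n$, and by the Weyl-type asymptotics \eqref{lam} one has $\lambda_n\sim(n\pi/2)^{2s}\to\infty$, so the coefficients $\psi_{0,n}e^{-\lambda_n(T-t)}$ decay faster than any power of $n$. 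This superpolynomial decay should dominate any polynomial growth of $\|\mathcal{N}_s\varphi_n\|_{L^2(\Omc)}$ in $n$. Concretely, I would first establish a polynomial bound $\|\mathcal{N}_s\varphi_n\|_{L^2(\Omc)}\le C\lambda_n^{\beta}$ for some $\beta>0$ and all $n$ — this follows from the mapping properties of $\mathcal{N}_s$ on $H^s(\RR)$ (recall $\mathcal{N}_s:H^s(\RR)\to H^s_{\rm loc}(\Omc)$ by \cite[Lemma 3.2]{GSU}) combined with elliptic estimates $\|\varphi_n\|_{H^s(\RR)}\le C\sqrt{\lambda_n}$ coming from $\mathcal F(\varphi_n,\varphi_n)=\lambda_n$, and the boundedness of $\Omc\cap(\text{a bounded set})$ if one restricts to a bounded piece of the exterior, or more carefully via the singular-integral definition \eqref{NLND} of $\mathcal{N}_s$. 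Then, for fixed $t<T$,
\begin{align*}
\sum_{n=1}^\infty|\psi_{0,n}|\,e^{-\lambda_n(T-t)}\|\mathcal{N}_s\varphi_n\|_{L^2(\Omc)}\le C\sum_{n=1}^\infty|\psi_{0,n}|\,\lambda_n^{\beta}e^{-\lambda_n(T-t)}<\infty,
\end{align*}
where finiteness follows from $\sup_n\lambda_n^{\beta}e^{-\lambda_n(T-t)}<\infty$, the Cauchy--Schwarz inequality, and $\sum_n\psi_{0,n}^2=\|\psi_0\|_{L^2}^2<\infty$. This absolute convergence in $L^2(\Omc)$ legitimizes applying $\mathcal{N}_s$ termwise and yields \eqref{norm-der}. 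The subtlety to watch is that the bound on $\|\mathcal{N}_s\varphi_n\|_{L^2(\Omc)}$ must hold on the \emph{unbounded} set $\Omc=\R\setminus(-1,1)$; the decay $|x-y|^{-1-2s}$ in \eqref{NLND}, integrated against the bounded-support eigenfunction $\varphi_n$ (supported in $\overline{(-1,1)}$), gives integrability at infinity, so the $L^2(\Omc)$-norm is indeed finite, and I would verify this decay is uniform enough in $n$ after the polynomial bound.
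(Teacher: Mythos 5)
Your derivation of the representation \eqref{eq-25} and of part (a) is correct and is essentially the paper's own argument: the paper expands $\psi(\cdot,t)$ in the eigenbasis $\{\varphi_n\}$, reduces \eqref{Dual} to the ODE $-\psi_n'(t)+\lambda_n\psi_n(t)=0$, $\psi_n(T)=\psi_{0,n}$, and reads off the series; your time reversal plus semigroup generation is the same computation in different clothing, and your Parseval argument gives \eqref{Dual-EST-1} with $C=1$.

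The gap is in part (b), and it sits exactly at the step you flagged as the main obstacle. You want a uniform polynomial bound $\|\mathcal N_s\varphi_n\|_{L^2(\Omc)}\le C\lambda_n^{\beta}$, to be deduced from the mapping property $\mathcal N_s\colon H^s(\R)\to H^s_{\rm loc}(\Omc)$ of \cite[Lemma 3.2]{GSU} together with $\|\varphi_n\|_{H^s(\R)}\le C\sqrt{\lambda_n}$. But $H^s_{\rm loc}(\Omc)$ controls $\mathcal N_s\varphi_n$ only on compact subsets of the \emph{open} set $\Omc$, i.e., away from the points $x=\pm1$; it gives nothing up to $\partial(-1,1)$. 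And the boundary, not infinity, is where the difficulty lies (your Cauchy--Schwarz bound on the kernel \eqref{NLND} indeed settles integrability at infinity, since $\varphi_n$ is supported in $[-1,1]$). For $x>1$ one has $\mathcal N_s\varphi_n(x)=-C_s\int_{-1}^1\varphi_n(y)\,|x-y|^{-1-2s}\,dy$, and the known boundary regularity of Dirichlet eigenfunctions of the fractional Laplacian gives $\varphi_n(y)\approx (\varphi_n/\delta^s)(1)\,(1-y)^s$ near $y=1$, with $(\varphi_1/\delta^s)(1)>0$ by the fractional Hopf lemma. A direct computation then yields $|\mathcal N_s\varphi_1(x)|\ge c\,\dist\bigl(x,(-1,1)\bigr)^{-s}$ as $x\downarrow 1$, which is square integrable near $x=1$ only when $s<\frac12$. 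Consequently the bound you need is not available on all of $\Omc$ when $\frac12\le s<1$ (already the membership $\mathcal N_s\varphi_1\in L^2(\Omc)$ is then problematic, e.g.\ take $\psi_0=\varphi_1$), and the exponential factors $e^{-\lambda_n(T-t)}$ cannot rescue the argument: the singularity is in the $x$ variable, uniformly in $n$.

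In fairness, the paper itself offers no proof of (b) — it states only that \eqref{norm-der} ``can also be easily justified'' — so you attempted more than the paper does and ran into the genuinely delicate point it glosses over. Your scheme does close if $L^2(\Omc)$ is replaced by $L^2(K)$ for a compact set $K$ contained in the open exterior (or if the observation set is assumed compactly contained in $\Omc$): there the kernel $|x-y|^{-1-2s}$ is bounded uniformly for $y\in(-1,1)$, $x\in K$, your Cauchy--Schwarz estimate gives $\|\mathcal N_s\varphi_n\|_{L^2(K)}\le C_K$ with a constant independent of $n$, and the termwise identity \eqref{norm-der} follows for every fixed $t<T$. On the full exterior, any proof must confront the $\dist^{-s}$ boundary behavior, and the conclusion as stated depends on the range of $s$.
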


\begin{proof}
 Using the spectral theorem for selfadjoint operators with compact resolvent, we are reduced to look for a solution $\psi$ of the form
\begin{align*}
\psi(x,t)=\sum_{n=1}^\infty(\psi(\cdot,t),\varphi_n)_{L^2(-1,1)}\varphi_n(x).
\end{align*}
Replacing this expression in \eqref{Dual} and letting $\psi_n(t):=(\psi(\cdot,t),\varphi_n)_{L^2(-1,1)}$, we get that $\psi_n(t)$ solves the following ordinary differential equation:
\begin{align*}
-\psi_n^{\prime}(t)+\lambda_n\psi_n(t)=0, \;t\in (0,T);\;\;\mbox{ and }\;\psi_n(T)=\psi_{0,n}.
\end{align*}
It is straightforward to show that $\psi$ is given by \eqref{eq-25}. Noticing that $\psi(x,t)=e^{-(T-t)(-\partial_x^2)_D^s}\psi_0(x)$ (where we recall that $(e^{-t(-\partial_x^2)_D^s})_{t\ge0 }$ is the strongly continuous semigroup on $L^2(-1,1)$ generated by the operator $-(\partial_x^2)_D^s$), and using semigroups theory, it is well-known that $\psi\in C([0,T];L^2(-1,1))$.
The estimate \eqref{Dual-EST-1} and the identity \eqref{norm-der} can also be easily justified. The proof is finished.
\end{proof}

We conclude this section with the following remark.
\begin{remark}
Using semigroups theory, it is well-known that the solution $\psi\in C([0,T];L^2(-1,1))$ of the backward system \eqref{Dual} enjoys the following regularity:
$$\psi\in C([0,T];L^2(-1,1))\cap L^2((0,T); \widetilde H_0^s\Om)\cap H^1((0,T);\widetilde H^{-s}\Om).$$ 
\end{remark}

\section{Proof of the main results}\label{prof-ma-re}

In this section we give the proof of the main results of this work, namely Theorem \ref{T1}. In order to do this, we need first to establish some auxiliaries results that will be used in the proof.

\begin{lemma}\label{lem-1}
The system \eqref{HE-EX} is null controllable at time $T>0$ if and only if for every initial datum $u_0\in  L^2(-1,1)$, there exists a control function $g\in L^2((0,T);\widetilde H_0^s(\mathcal O))$ such that the solution $\psi$ of the dual system \eqref{Dual} satisfies
\begin{align}\label{NSC-NC}
\int_{-1}^1u_0(x)\psi(x,0)\;dx=\int_0^T\int_{\mathcal{O}} g(x,t)\mathcal{N}_s\psi(x,t)\;dxdt,
\end{align}
for every $\psi_0\in L^2(-1,1)$.
\end{lemma}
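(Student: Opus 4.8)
The plan is to establish the duality characterization of null controllability by the standard Hilbert Uniqueness Method adapted to the exterior control setting. The key is to compute the quantity $\frac{d}{dt}\int_{-1}^1 u(x,t)\psi(x,t)\,dx$ along a solution $u$ of the control system \eqref{HE-EX} and a solution $\psi$ of the backward dual system \eqref{Dual}, and then integrate in time over $[0,T]$. First I would fix an arbitrary control $g\in L^2((0,T);H^s(\Omc))$ with associated solution $u$ given by Theorem \ref{expl-2}, and an arbitrary $\psi_0\in L^2(-1,1)$ with dual solution $\psi$ given by Theorem \ref{theo-48}.

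The central computation differentiates the pairing in time. Using the two equations, one gets formally
\begin{align*}
\frac{d}{dt}\int_{-1}^1 u\psi\,dx=\int_{-1}^1\big(\partial_t u\,\psi+u\,\partial_t\psi\big)\,dx
=\int_{-1}^1\Big(-(-\partial_x^2)^s u\,\psi+u\,(-\partial_x^2)^s\psi\Big)\,dx.
\end{align*}
The crucial point is that $\psi$ vanishes in $\Omc$ while $u=g\chi_{\mathcal O}$ there, so the nonlocal integration by parts formula \eqref{Int-Part} does not produce a symmetric cancellation but instead leaves the exterior boundary term involving the nonlocal normal derivative $\mathcal N_s$. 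Applying \eqref{Int-Part} to each term (with the roles of the function in $H_0^s(\bOm)$ and the one in $H^s(\RR)$ appropriately assigned), the two bulk Dirichlet-form contributions cancel and what survives is the exterior term $\int_{\Omc} g\,\mathcal N_s\psi\,dx$, which by the support condition $g=g\chi_{\mathcal O}$ reduces to an integral over $\mathcal O$. Here I would invoke Theorem \ref{theo-48}(b), which guarantees that $\mathcal N_s\psi(\cdot,t)$ exists and lies in $L^2(\Omc)$ for $t\in[0,T)$, so that the pairing against $g\in L^2((0,T);H^s(\Omc))$ is well defined. Integrating the resulting identity from $0$ to $T$ yields
\begin{align*}
\int_{-1}^1 u(x,T)\psi(x,T)\,dx-\int_{-1}^1 u(x,0)\psi(x,0)\,dx=\int_0^T\int_{\mathcal O} g(x,t)\,\mathcal N_s\psi(x,t)\,dx\,dt.
\end{align*}
Since $u(\cdot,0)=u_0$ and $\psi(\cdot,T)=\psi_0$, this reads $(u(\cdot,T),\psi_0)_{L^2}-(u_0,\psi(\cdot,0))_{L^2}=\int_0^T\int_{\mathcal O} g\,\mathcal N_s\psi\,dx\,dt$.

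With this master identity in hand, the equivalence follows by a clean logical argument. Null controllability means: for each $u_0$ there is $g$ so that $u(\cdot,T)=0$, i.e. $(u(\cdot,T),\psi_0)_{L^2}=0$ for all $\psi_0$. Feeding $(u(\cdot,T),\psi_0)_{L^2}=0$ into the master identity gives exactly \eqref{NSC-NC}, namely $\int_{-1}^1 u_0\,\psi(\cdot,0)\,dx=\int_0^T\int_{\mathcal O} g\,\mathcal N_s\psi\,dx\,dt$ for every $\psi_0$. Conversely, if \eqref{NSC-NC} holds for every $\psi_0$, then the master identity forces $(u(\cdot,T),\psi_0)_{L^2}=0$ for all $\psi_0\in L^2(-1,1)$, and since $\{\varphi_n\}$ is total this means $u(\cdot,T)=0$, i.e. null controllability. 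I would run both directions explicitly rather than just asserting the "if and only if."

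The main obstacle I anticipate is rigorously justifying the integration by parts in time and space at the level of weak (series) solutions, rather than smooth ones. The differentiation under the integral sign and the application of \eqref{Int-Part} are only directly licensed for sufficiently regular data. The clean way to handle this is to argue first for smooth controls $g\in\mathcal D((0,T)\times\Omc)$ and smooth $\psi_0$ (e.g. finite linear combinations of eigenfunctions $\varphi_n$), where every manipulation is justified termwise using $(-\partial_x^2)^s\varphi_n=\lambda_n\varphi_n$ and the explicit series \eqref{4} and \eqref{eq-25}, and then pass to the limit by density, exactly as in Step~(b) of the proof of Theorem \ref{expl-2}. The regularity from Theorem \ref{theo-48}(b) ensuring $\mathcal N_s\psi\in L^2(\Omc)$ is precisely what makes the limiting pairing $\int_0^T\int_{\mathcal O} g\,\mathcal N_s\psi\,dx\,dt$ continuous with respect to $g$ in $L^2((0,T);H^s(\Omc))$, so the density argument closes.
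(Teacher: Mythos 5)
Your strategy is exactly the paper's: pair the state with the dual solution, apply the nonlocal integration-by-parts formula \eqref{Int-Part}, obtain a duality (master) identity, and read off both directions of the equivalence; your density/regularization remarks at the end are sound. The problem is a sign error in your master identity, and it is not cosmetic, because your subsequent logic contradicts the identity you wrote. Carrying out your own computation carefully: from $\partial_t u=-(-\partial_x^2)^s u$ and $\partial_t\psi=+(-\partial_x^2)^s\psi$ in $(-1,1)$, and from the Green-type identity (a consequence of \eqref{Int-Part} and the fact that it is $\psi(\cdot,t)$, not $u(\cdot,t)$, that vanishes in $\Omc$)
\begin{align*}
\int_{-1}^1\psi\,(-\partial_x^2)^s u\,dx=\int_{-1}^1 u\,(-\partial_x^2)^s\psi\,dx+\int_{\Omc}u\,\mathcal N_s\psi\,dx,
\end{align*}
one obtains
\begin{align*}
\frac{d}{dt}\int_{-1}^1 u\psi\,dx=-\int_{\Omc}u\,\mathcal N_s\psi\,dx=-\int_{\mathcal O}g\,\mathcal N_s\psi\,dx,
\end{align*}
so that integration over $[0,T]$ yields
\begin{align*}
\int_{-1}^1 u_0(x)\psi(x,0)\,dx-\int_{-1}^1 u(x,T)\psi_0(x)\,dx=\int_0^T\int_{\mathcal O}g(x,t)\,\mathcal N_s\psi(x,t)\,dx\,dt,
\end{align*}
which is the paper's identity \eqref{Ob-In}. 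Your master identity is this with the opposite sign: the exterior term survives with a minus sign relative to $u(T)\psi(T)-u(0)\psi(0)$.

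With your sign, the deduction ``feeding $(u(\cdot,T),\psi_0)_{L^2}=0$ into the master identity gives exactly \eqref{NSC-NC}'' fails: it gives $\int_{-1}^1 u_0\psi(\cdot,0)\,dx=-\int_0^T\int_{\mathcal O}g\,\mathcal N_s\psi\,dx\,dt$, the negative of \eqref{NSC-NC}; symmetrically, in your converse direction \eqref{NSC-NC} combined with your identity would only give $(u(\cdot,T),\psi_0)_{L^2}=2(u_0,\psi(\cdot,0))_{L^2}$, not $u(\cdot,T)=0$. The lemma itself is robust to this slip, since $g$ is only existentially quantified and $g\mapsto u$ is affine in $g$, so replacing $g$ by $-g$ converts one sign convention into the other; but your write-up neither corrects the sign nor invokes that replacement, so as written the two halves of the argument are inconsistent with the identity they rest on. Fix the sign in the integration-by-parts step and the rest of your proof goes through verbatim and coincides with the paper's.
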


\begin{proof}
Let $u_0\in L^2(-1,1)$ and $g\in L^2((0,T);\widetilde H_0^s(\mathcal O))$. We write the solution $u$ of \eqref{HE-EX} as $u:=v+w$ where $v$ and $w$ are the solutions of \eqref{SS1}  and \eqref{SS2}, respectively.
Let $\psi$ be the solution of the dual problem \eqref{Dual}. Taking $\psi$ as a test function in Definition \ref{DF1} of a weak solution to the system \eqref{SS2}, using the integration by parts formula \eqref{Int-Part},  noticing that 
$$-\psi_t+(-\partial_x^2)^s\psi=0 \mbox{ in }(-1,1)\times (0,T),$$
and that $\psi=0$ in $(\Omc)\times (0,T)$,
we obtain that
\begin{align}\label{abcd}
0=&\int_0^T\langle v_t(\cdot,t)+(-\partial_x^2)^sv(\cdot,t),\psi(\cdot,t)\rangle\;dt+\int_0^T\langle-\psi_t(\cdot,t)+(-\partial_x^2)^s\psi(\cdot,t),w(\cdot,t)\rangle\;dt\notag\\
=&\int_{-1}^1\Big(v(x,T)\psi(x,T)-v(x,0)\psi(x,0)\Big)\;dx
+\int_0^T\int_{\Omc}\Big(v(x,t)\mathcal N_s\psi(x,t)-\psi(x,t)\mathcal N_sv(x,t)\Big)\;dxdt\notag\\
&+\int_{-1}^1w(x,T)\psi(x,T)\;dx+\int_0^T\int_{\Omc}w(x,t)\mathcal N_s\psi(x,t)\;dxdt\notag\\
=&-\int_{-1}^1v(x,0)\psi(x,0)\;dx +\int_{-1}^1\Big(v(x,T)+w(x,T)\Big)\psi(x,T)\;dx\notag\\
&+ \int_0^T\int_{\Omc}\Big(v(x,t)+w(x,t)\Big)\mathcal N_s\psi(x,t)\;dxdt.
\end{align}
Since $v(x,0)=u(x,0)=u_0(x)$ for a.e. $x\in (-1,1)$, and $u(x,t)=v(x,t)+w(x,t)$ for a.e. $(x,t)\in (-1,1)\times (0,T]$, and $u=g\chi_{\mathcal{O}\times(0,T)}$ in  $(\Omc)\times (0,T)$, it follows from \eqref{abcd} that 
\begin{align}\label{Ob-In}
\int_{-1}^1u(x,0)\psi(x,0)\;dx -\int_{-1}^1u(x,T)\psi(x,T)\;dx=\int_0^T\int_{\mathcal O}g(x,t)\mathcal N_s\psi(x,t)\;dxdt.
\end{align}

Now if \eqref{NSC-NC} holds, then it follows from \eqref{Ob-In} that $\displaystyle\int_{-1}^1u(x,T)\psi(x,T)\;dx=0$. Thus, we can deduce that $u(\cdot,T)=0$  in $(-1,1)$  and the system \eqref{HE-EX} is null controllable.

Conversely, if the system \eqref{HE-EX} is null controllable, that is, if $u(\cdot,T)=0$  in $(-1,1)$, then \eqref{NSC-NC} follows from \eqref{Ob-In} and the proof is finished.
\end{proof}

Next, using classical duality arguments, we can establish the following criterion for the null controllability.
\begin{lemma}\label{lem-2}
Let $\mathcal{O}\subset (\Omc)$ be an arbitrary nonempty open set. Then the following assertions are equivalent.
\begin{enumerate}
\item The system \eqref{HE-EX} is null controllable at time $T>0$ and  there is a constant $C>0$ such that 
\begin{align}\label{n1}
\|g\|_{L^2((0,T);\widetilde H_0^s(\mathcal O))}\leq C\|u_0\|_{L^2(-1,1)}.
\end{align}
\item For every $T>0$ and $\psi_0\in L^2(-1,1)$, let $\psi$ be the unique weak solution of the dual system \eqref{Dual} with final datum $\psi_0$. Then, there is a constant $C=C(T)>0$ such that the following observability inequality holds:
\begin{align}\label{n2}
\|\psi(\cdot,0)\|_{L^2(-1,1)}^2\leq C\int_0^T\int_{\mathcal{O}}|\mathcal{N}_s \psi(x,t)|^2dxdt.
\end{align}
\end{enumerate}
\end{lemma}

\begin{proof}
(b) $ \Rightarrow$ (a): We start by proving that the observability inequality \eqref{n2} implies the null controllability. 
Indeed, consider the linear subspace $\mathbb H$ of $L^2((0,T);\widetilde H^{-s}(\mathcal{O}))$ given by
\begin{align*}
\mathbb H:=\Big\{\mathcal{N}_s\psi\Big|_{\mathcal{O}\times(0,T)}:\; \psi \text{ solves the system }\eqref{Dual} \text{ with }\psi_0\in L^2(-1,1)\Big\}.
\end{align*}
Next, we consider the linear functional $F:\mathbb H\to\RR$ defined by
\begin{align*}
F(\mathcal{N}_s\psi):= \int_{-1}^1u_0(x)\psi(x,0)dx,
\end{align*}
where $u_0\in L^2(-1,1)$. It follows from the observability inequality \eqref{n2} that $F$ is well defined and bounded on $\mathbb H$. More precisely, there is a constant $C>0$ such that
\begin{align*}
|F(\mathcal{N}_s\psi)|\leq C \|u_0\|_{L^2(-1,1)}\|\mathcal{N}_s\psi\|_{L^2((0,T);\widetilde H^{-s}(\mathcal{O}))}.
\end{align*}
It follows from the Hahn--Banach theorem that $F$ can be extended to a bounded linear functional $\widetilde{F}:L^2((0,T);\widetilde H^{-s}(\mathcal{O}))\to \RR$ such that
\begin{align}\label{new1}
|\widetilde{F}v|\leq C\|u_0\|_{L^2(-1,1)}\|v\|_{L^2((0,T);\widetilde H^{-s}(\mathcal{O}))},\quad \forall v\in L^2((0,T);\widetilde H^{-s}(\mathcal{O})).
\end{align}
By the Riesz representation theorem, there is a $g\in (L^2((0,T);\widetilde H^{-s}(\mathcal{O})))^{*}=L^2((0,T);\widetilde H_0^s(\mathcal{O}))$ such that 
\begin{align*}
\widetilde{F}(\eta)=\int_0^T\langle \eta(\cdot,t),g(\cdot,t)\rangle\; dt, \quad \forall \;\eta\in L^2((0,T);\widetilde H^{-s}(\mathcal{O})).
\end{align*}
Moreover, we have that
\begin{align*}
\|\widetilde{F}\|=\|g\|_{L^2((0,T);\widetilde H_0^s(\mathcal{O}))}.
\end{align*}
Thus, we can deduce from \eqref{new1} that
\begin{align*}
\|g\|_{L^2((0,T);\widetilde H_0^s(\mathcal{O}))}\leq C\|u_0\|_{L^2(-1,1)}.
\end{align*}
Notice that $\mathcal{N}_s\psi \in L^2((0,T);L^2(\mathcal{O})) \subset L^2((0,T);\widetilde H^{-s}(\mathcal{O}))$. Therefore, using the definition of $F$ we can deduce that 
\begin{align*}
F(\mathcal{N}_s\psi)=\int_{-1}^1u_0(x)\psi(x,0)dx=\int_0^T\int_{\mathcal{O}}g(x,t)\mathcal{N}_s\psi(x,t)dxdt,
\end{align*}
for every $\psi_0\in L^2(-1,1)$. We have shown that there exists a control function $g\in L^2((0,T);\widetilde H_0^s(\mathcal{O}))$ satisfying \eqref{n1} and   
\begin{align*}
\int_0^T\int_{\mathcal{O}}g(x,t)\mathcal{N}_s\psi(x,t)dxdt=\int_{-1}^1u_0(x)\psi(x,0)dx,
\end{align*}
for every $\psi_0\in L^2(-1,1)$. Thus, it follows from Lemma \ref{lem-1} that the system  \eqref{HE-EX} is null controllable.\\

(a) $ \Rightarrow$ (b): Now, we show that the null controllability implies the observability inequality \eqref{n2}. Recall that from Lemma \ref{lem-1}, we have that for every $u_0\in L^2(-1,1)$, there exists a control $g \in L^2((0,T);\widetilde H_0^s(\mathcal O))$ such that the unique solution $\psi$ of the dual system \eqref{Dual} satisfies
\begin{align*}
\int_{-1}^1u_0(x)\psi(x,0)\;dx=\int_0^T\int_{\mathcal{O}} g(x,t)\mathcal{N}_s\psi(x,t)\;dxdt,
\end{align*}
for every $\psi_0\in L^2(-1,1)$.
Taking $u_0(x)=\psi(x,0)$ in the preceding identity, using \eqref{n1} and Young's inequality, we get that
\begin{align*}
\int_{-1}^1|\psi(x,0)|^2dx\leq \frac{C}{2\varepsilon}\|u_0\|_{L^2(-1,1)}^2+\frac{\varepsilon}{2}\int_0^T\int_{\mathcal O}|\mathcal{N}_s\psi(x,t)|^2dxdt,
\end{align*}
for every $\varepsilon>0$. Taking $\varepsilon=C$ and since $u_0(x)=\psi(x,0)$, we obtain \eqref{n2}. The proof is finished. 
\end{proof}

Finally, for the proof of Theorem \ref{T1} we also need the following technical result.

\begin{lemma}\label{uni-bound}
Let $\{\varphi_{k}\}_{k\in\N}$ be the orthogonal basis of normalized eigenfunctions of the operator $(-\partial_x^2)_D^s$ associated with the eigenvalues $\{\lambda_k\}_{k\in\N}$. Then, for every nonempty open set $\mathcal{O}\subset (\Omc)$, there is a scalar $\eta>0$ (independent of $k$)  such that for every $k\in\N$,
the function $\mathcal{N}_s\varphi_k$ is uniformly bounded  from below by $\eta$ in $L^2(\mathcal{O})$. Namely,
\begin{align}
\exists\; \eta>0\;\mbox{ such that }\; \|\mathcal{N}_s\varphi_k\|_{L^2(\mathcal{O})}\geq \eta,\;\;\; \forall \;k\in\N.
\end{align}
\end{lemma}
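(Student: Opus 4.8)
\textbf{Proof proposal for Lemma \ref{uni-bound}.}

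The plan is to argue by contradiction using the asymptotics \eqref{lam} of the eigenvalues together with the unique continuation property of Lemma \ref{lem-UCD}. Suppose the claim fails. Then there is no uniform lower bound, so one can extract a subsequence $\{\varphi_{k_j}\}_{j}$ along which $\|\mathcal N_s\varphi_{k_j}\|_{L^2(\mathcal O)}\to 0$ as $j\to\infty$. The idea is to show that this forces a nonzero Dirichlet eigenfunction whose nonlocal normal derivative vanishes on $\mathcal O$, which is impossible by Lemma \ref{lem-UCD}. The main work is to produce such a limit object and to verify it is a genuine eigenfunction satisfying the hypotheses of that lemma.

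First I would record the normalization and regularity we have for free: each $\varphi_k\in D((-\partial_x^2)_D^s)$ with $\|\varphi_k\|_{L^2(-1,1)}=1$, it solves $(-\partial_x^2)_D^s\varphi_k=\lambda_k\varphi_k$, and by \cite[Lemma 3.2]{GSU} its nonlocal normal derivative $\mathcal N_s\varphi_k$ lies in $H^s_{\rm loc}(\Omc)$. The subtle point is that along the subsequence the eigenvalues $\lambda_{k_j}$ typically diverge to $\infty$ (by \eqref{lam}), so one cannot directly pass to a limit of the $\varphi_{k_j}$ themselves. The remedy is to exploit that $\mathcal N_s\varphi_k$ scales like $\varphi_k$ and, crucially, that the $L^2(\mathcal O)$-norm of $\mathcal N_s\varphi_k$ is controlled from above uniformly in $k$: using the defining integral \eqref{NLND} and the fact that $\varphi_k=0$ in $\Omc$, for $x\in\mathcal O$ one has
\begin{align*}
\mathcal N_s\varphi_k(x)=C_s\int_{-1}^1\frac{-\varphi_k(y)}{|x-y|^{1+2s}}\;dy,
\end{align*}
so that by Cauchy--Schwarz $\|\mathcal N_s\varphi_k\|_{L^2(\mathcal O)}$ is bounded by a constant depending only on $s$, $\mathcal O$ and $\operatorname{dist}(\mathcal O,(-1,1))$ times $\|\varphi_k\|_{L^2(-1,1)}=1$. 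This uniform upper bound is what makes the normalization meaningful and rules out a trivial obstruction.

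The hard part will be converting the hypothesized failure of the lower bound into an honest eigenfunction with vanishing normal derivative on $\mathcal O$. Here the cleanest route is to reduce to finitely many eigenvalues. By the asymptotics \eqref{lam}, for $\frac12\le s<1$ the eigenvalues have finite multiplicity and are separated, so any sequence of normalized eigenfunctions with distinct indices has divergent eigenvalues; I would therefore argue that if infinitely many indices were involved, the kernel representation above combined with an equicontinuity/compactness argument (the kernel $|x-y|^{-1-2s}$ is smooth for $x\in\mathcal O$, $y\in(-1,1)$, giving uniform Hölder bounds on $\mathcal N_s\varphi_k$ over compact subsets of $\mathcal O$ via Arzelà--Ascoli) yields a nonzero limit $\Phi$ of $\mathcal N_s\varphi_{k_j}$ in $L^2(\mathcal O)$, contradicting $\|\mathcal N_s\varphi_{k_j}\|_{L^2(\mathcal O)}\to0$ unless the $\varphi_{k_j}$ themselves converge weakly to a nonzero eigenfunction $\varphi$ with $\mathcal N_s\varphi=0$ on $\mathcal O$. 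In either resolution one lands on a $\varphi\not\equiv0$ in $D((-\partial_x^2)_D^s)$ satisfying $(-\partial_x^2)_D^s\varphi=\lambda\varphi$ for some $\lambda>0$ and $\mathcal N_s\varphi=0$ in $\mathcal O$; Lemma \ref{lem-UCD} then forces $\varphi\equiv0$ in $\R$, the desired contradiction. I expect the delicate bookkeeping to be exactly this compactness step — ensuring the limit is nonzero and is a legitimate eigenfunction despite possible eigenvalue escape — while the unique continuation input and the kernel estimates are comparatively routine.
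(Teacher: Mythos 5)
Your proposal has a genuine gap, and it sits exactly where you yourself flagged it: the compactness step cannot produce a nonzero limit object. The only nontrivial case of the contradiction argument is when the subsequence involves infinitely many \emph{distinct} indices (if one index repeats infinitely often, then $\mathcal N_s\varphi_k=0$ a.e.\ in $\mathcal O$ for that fixed $k$ and Lemma \ref{lem-UCD} finishes immediately). But in that case $\{\varphi_{k_j}\}_j$ is an orthonormal sequence in $L^2(-1,1)$ and therefore converges weakly to zero. Since for each fixed $x\in\mathcal O$ the kernel $g_x(y):=|x-y|^{-1-2s}$ belongs to $L^2(-1,1)$, your own representation $\mathcal N_s\varphi_{k_j}(x)=-C_{s}\,(g_x,\varphi_{k_j})_{L^2(-1,1)}$ forces $\mathcal N_s\varphi_{k_j}(x)\to 0$ pointwise on $\mathcal O$. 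Hence the Arzel\`a--Ascoli limit $\Phi$ is identically zero --- perfectly consistent with $\|\mathcal N_s\varphi_{k_j}\|_{L^2(\mathcal O)}\to 0$, so no contradiction arises --- and the alternative branch of your dichotomy, weak convergence of the $\varphi_{k_j}$ to a \emph{nonzero} eigenfunction, is impossible for the same reason. Unique continuation applies to a single fixed eigenfunction, so what your argument actually proves is only that $\|\mathcal N_s\varphi_k\|_{L^2(\mathcal O)}>0$ for each fixed $k$; the uniformity in $k$, which is the entire content of the lemma, is untouched, and no soft compactness argument of this type can reach it, because every limit object it manufactures in the high-frequency regime is zero.

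Worse, your Cauchy--Schwarz bound, pushed one step further, turns against the statement itself: if $\operatorname{dist}(\mathcal O,(-1,1))>0$, then $h(x):=C_{s}\|g_x\|_{L^2(-1,1)}$ dominates $|\mathcal N_s\varphi_k|$ on $\mathcal O$, decays like $(|x|-1)^{-\frac12-2s}$, hence lies in $L^2(\mathcal O)$; combined with the pointwise convergence $\mathcal N_s\varphi_k\to 0$ noted above, dominated convergence gives $\|\mathcal N_s\varphi_k\|_{L^2(\mathcal O)}\to 0$ as $k\to\infty$. So your scheme, completed rigorously, shows the uniform lower bound \emph{fails} on every $\mathcal O$ at positive distance from $(-1,1)$ rather than proving it --- a serious warning that no proof can rely solely on the soft ingredients you invoke (normalization, kernel smoothness, unique continuation). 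This is also why the paper's proof is of a completely different nature: it injects quantitative spectral information, namely Kwa\'snicki's explicit half-line eigenfunctions $F_{\mu_k}$ and the approximants $\varrho_k$ satisfying $\|\varrho_k-\varphi_k\|_{L^2(-1,1)}=O(k^{-1})$ (resp.\ $O(k^{-2s})$), and couples it with compactness of the map $L:H_0^s(\bOm)\to H^{-s}(\mathcal O)$, $Lv=(\mathcal N_sv)|_{\mathcal O}$, before invoking Lemma \ref{lem-UCD}. Whatever one's assessment of the details of that argument, it is precisely this quantitative input that your proposal lacks, and without something of that kind the approach you describe cannot be repaired.
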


\begin{proof}
We prove the result in several steps.

{\bf Step 1}:
Firstly,  since $\varphi_k=0$  in $\Omc$ for every $k\in\NN$, it follows from the definition of the operators $(-\partial_x^2)^s$ and $\mathcal N_s$ that for almost every $x\in\mathcal O\subseteq(\Omc)$, we have 
\begin{align}\label{eq-eg}
(-\partial_x^2)^s\varphi_k(x)=C_{s}\mbox{P.V.}\int_{\RR}\frac{\varphi_k(x)-\varphi_k(y)}{|x-y|^{1+2s}}\;dy=C_{s}\int_{-1}^1\frac{\varphi_k(x)-\varphi_k(y)}{|x-y|^{1+2s}}\;dy=
\mathcal N_s\varphi_k(x).
\end{align}
We have shown that $(\mathcal N_s \varphi_k)|_{\mathcal O}=((-\partial_x^2)^s\varphi_k)|_{\mathcal O}$ for every $k\in\NN$. We notice that \eqref{eq-eg} holds not only for $\varphi_k$, but for all functions in $\widetilde H_0^s\Om$.

Secondly, let us introduce the auxiliary function $q:\RR\to[0,\infty)$ defined by: 
\begin{align}\label{q_def}
	q(x) := \begin{cases}
		0 & x\in \left(-\infty,-\frac 13\right),
		\\[7pt]
		\displaystyle \frac 92 \left(x+\frac 13\right)^2 & x\in \left(-\frac 13,0\right),
		\\[10pt]
		\displaystyle 1-\frac 92 \left(x-\frac 13\right)^2 & x\in \left(0,\frac 13\right),
		\\[7pt]
		1 & x\in \left(\frac 13,+\infty\right).
	\end{cases}
\end{align}
For any $\alpha>0$, we define the function $F_\alpha:\RR\to\RR$ as follows:
\begin{align*} 
F_\alpha(x) = F(\alpha x):= \sin\left(\alpha x+\frac{(1-s)\pi}{4}\right)-G(\alpha x),
\end{align*}
where $G$ is the Laplace transform of the function
\begin{align*}
\gamma(y):=\frac{\sqrt{4s}\sin(s\pi)}{2\pi}\frac{y^{2s}}{1+y^{4s}-2y^{2s}\cos(s\pi)}\exp\left(\frac 1\pi\int_0^{+\infty}\frac{1}{1+r^2}\log\left(\frac{1-r^{2s}y^{2s}}{1-r^2y^2}\right)\,dr\right).
\end{align*}
Next, we define the sequence of real numbers
\begin{align*} 
\mu_k:=\frac{k\pi}{2}-\frac{(1-s)\pi}{4},\quad k\geq 1.
\end{align*}
It has been shown in \cite[Example 6.1]{kwasnicki2010spectral} that $F_{\mu_k}$ is the solution of the system
\begin{align*}
\begin{cases}
\fl{s}{F_{\mu_k}(x)}=\mu_k F_{\mu_k}(x) & x> 0,
\\
F_{\mu_k}(x)=0 & x\le 0.
\end{cases}
\end{align*}
In other words, $\{F_{\mu_k}\}_{k\geq 1}$ are the eigenfunctions of  $(-\partial_x^2)^s$ on the interval $(0,\infty)$  with the zero Dirichlet exterior condition, and $\{\mu_k\}_{k\geq 1}$ are the corresponding eigenvalues.
Let us now define 
\begin{align*} 
	\varrho_k(x):=q(-x)F_{\mu_k}(1+x)+(-1)^k F_{\mu_k}(1-x),\quad x\in\RR,\quad k\geq 1.
\end{align*}
Notice that $F_{\mu_k}(1+x)=0$ for $x\leq -1$ and $F_{\mu_k}(1-x)=0$ for $x\geq 1$. This fact, together with the definition \eqref{q_def} of the function $q$ imply that, for all $k\geq 1$, $\varrho_k(x) = 0$ for $x\in\RR\setminus (-1,1)$. In addition, it follows from \cite[Lemma 1]{kwasnicki2012eigenvalues} that $\{\varrho_k\}_{k\in\NN}\subset D((-\partial_x^2)_D^s)$ and  there is a constant $C>0$ such that
\begin{align*}
\left| \fl{s}{\varrho_k}(x)-\mu_k^{2s}\varrho_k(x)\right|\leq \frac{C(1-s)}{\sqrt{2s}}\mu_k^{-1},\;\;\textrm{ for all }\;\; x\in (-1,1),\;k\geq 1.
\end{align*}
Furthermore, by \cite[Proposition 1]{kwasnicki2012eigenvalues}, there is a constant $C>0$ such that for every $k\ge 1$, we have
\begin{equation}
\|\varrho_k-\varphi_k\|_{L^2(-1,1)}\le 
\begin{cases}
\frac{C(1-s)}{k}\;\qquad&\mbox{ when }\; \frac 12\le s< 1,\\\
 \frac{C(1-s)}{k^{2s}}\;&\mbox{ when }\; 0<s<\frac 12.
 \end{cases}
\end{equation}

{\bf Step 2:}
Now, let $\mathcal{O}\subset \Omc$ be an arbitrary nonempty open set and assume that for every $\eta>0$ there exists $k\in\N$ such that 
\begin{align}\label{contr}
\|\mathcal{N}_s\varphi_k\|_{L^2(\mathcal{O})}<\eta.
\end{align}
It follows from \eqref{contr} that there is a subsequence $\{\varphi_{k_n}\}_{n\in\NN}$ such that
\begin{align}\label{contr-2}
\|\mathcal{N}_s\varphi_{k_n}\|_{L^2(\mathcal{O})}<\frac 1n,
\end{align}
for $n$ large enough. Since $L^2(\mathcal O)\hookrightarrow \widetilde H^{-s}(\mathcal O)$, it follows from \eqref{contr-2} that there is a constant $C>0$ (independent of $n$) such that for $n$ large enough, we have
\begin{align}\label{contr-3}
\|\mathcal{N}_s\varphi_{k_n}\|_{\widetilde H^{-s}(\mathcal{O})}\le \frac Cn.
\end{align}

{\bf Step 3}: Using the triangle inequality, we get that there is a constant $C>0$ such that
\begin{align}\label{nnn}
\|\varrho_{k_n}-\varphi_{k_n}\|_{\widetilde H_0^s\Om}^2\leq &C\|(-\partial_x^2)^s\varrho_{k_n}-(-\partial_x^2)^s\varphi_{k_n}\|_{L^2(-1,1)}^2\notag\\
\leq &C \Big(\|(-\partial_x^2)^s\varrho_{k_n}-\mu_{k_n}^{2s}\varrho_{k_n}\|_{L^2(-1,1)}^2+\|\varrho_{k_n}(\mu_{k_n}^{2s}-\lambda_{k_n})\|_{L^2(-1,1)}^2\notag \\
&+\|\lambda_{k_n}\varrho_{k_n}-(-\partial_x^2)^s\varphi_{k_n}\|_{L^2(-1,1)}^2\Big).
\end{align}
It follows from \eqref{nnn} and  Step 1 that there is a constant $C_{k_n}(s)>0$ which converges to zero as $n\to\infty$, such that 
\begin{align*} 
\|\varrho_{k_n}-\varphi_{k_n}\|_{\widetilde H_0^s(-1,1)}^2 \leq C_{k_n}(s).
\end{align*}
Let the operator $L$ be defined by 
$$L:  \widetilde H_0^{s}\Om\to \widetilde H^{-s}(\mathcal O),\; v\mapsto Lv:=((-\partial_x^2)^sv)|_{\mathcal O}=(\mathcal N_sv)|_{\mathcal O},$$
where we recall that $\widetilde H^{-s}(\mathcal O)=(\widetilde H_0^s(\mathcal O))^\star$.
 It has been shown in \cite[Lemma 2.2]{GRSU} that the operator $L$ is compact, injective with dense range. Let $B_1:=\overline{B}\left(\varrho_{k_n},C_{k_n}(s)\right)$ be the closed ball in $\widetilde H_0^s\Om$ with center in $\varrho_{k_n}$ and radius $C_{k_n}(s)$. Since $L$ is a compact operator, we have that the image of $B_1$, namely $L(B_1)$, is totally bounded in $\widetilde H^{-s}(\mathcal{O})$. Therefore, for every $\varepsilon>0$ there exists $N\in\N$ and $\{\psi_1,\ldots,\psi_N\}\subseteq B_1$ such that
\begin{align*}
L(B_1)\subseteq \bigcup_{j=1}^{N}\overline{B}_{\widetilde H^{-s}(\mathcal O)}(L(\psi_j),\varepsilon).
\end{align*}
We observe that,  the eigenfunction $\varphi_{k_n}$ belongs to $B_1$. Thus, there exists $j\in\{1,\ldots,N\}$ such that
\begin{align*}
L(\varphi_{k_n})\in \overline{B}_{\widetilde H^{-s}(\mathcal O)}(L(\psi_j),\varepsilon).
\end{align*}
We have shown that for $n$ large enough,
\begin{align*}
\|L(\varphi_{k_n})-L(\psi_j)\|_{\widetilde H^{-s}(\mathcal{O})}\leq \varepsilon.
\end{align*}
Since $\psi_j\in B_1$, firstly we obtain that $\varphi_{k_n}\rightarrow \psi_j$, as $n\to\infty$ in $\widetilde H_0^s\Om$  and, secondly we have that $\psi_j$ is an element of the spectrum $\{(\varphi_k,\lambda_k)\}_{k\geq 1}$. That is, $\psi_j$ is a solution of \eqref{ei-val-pro}. Finally, as $L(\varphi_{k_n})$ converges to zero in $\widetilde H^{-s}(\mathcal{O})$ (by \eqref{contr-3}), we can deduce that $L(\psi_j)=\mathcal N_s\psi_j=(-\partial_x^2)^s\psi_j=0$ a.e. in $\mathcal O$. It follows from the unique continuation property (Lemma \ref{lem-UCD})  that $\psi_j=0$  a.e. in $\RR$, which is a contradiction. The proof of the lemma is finished.
\end{proof}

Now we are ready to give the proof of our main results.

\begin{proof}[{\bf Proof of Theorem \ref{T1}}]
Let $u$ be the weak solution of \eqref{HE-EX} and $\psi$ the weak solution of the dual problem \eqref{Dual}.
Recall that by Lemma \ref{lem-1}, the system \eqref{HE-EX} is null controllable if and only if the identity \eqref{NSC-NC} holds. Moreover, from Lemma \ref{lem-2}, \eqref{NSC-NC}  is equivalent to the observability inequality \eqref{n2} for the dual system, that is, there exists  a constant $C>0$ such that
\begin{align}\label{obs-ine}
\|\psi(\cdot,0)\|_{L^2(-1,1)}^2\leq C\int_0^T\int_{\mathcal{O}}\left|\mathcal{N}_s \psi(x,t)\right|^2\;dxdt.
\end{align}
From Section \ref{sds}, the solution $\psi$ of \eqref{Dual} is given by
\begin{align*} 
\psi(\cdot,t)=\sum_{n=1}^{\infty}\psi_{0,n}e^{-\lambda_n(T-t)}\varphi_n(x),
\end{align*}
and its nonlocal normal derivative $\mathcal N_s\psi$ is given by
\begin{align*} 
\mathcal{N}_s\psi(\cdot,t)=\sum_{n=1}^{\infty}\psi_{0,n}e^{-\lambda_n(T-t)}\mathcal{N}_s\varphi_n(x).
\end{align*}
Therefore, using the above representations of $\psi$ and $\mathcal{N}_s\psi$,  the orthonormality of the eigenfunctions in $L^2(-1,1)$ and making the change of variable $T-t\to t$, we can deduce that the observability inequality \eqref{obs-ine} is equivalent to the following inequality:
\begin{align}\label{obs-ine-1}
\sum_{n=1}^{\infty}|\psi_{0,n}|^2 e^{-2\lambda_n T}\leq C\int_0^T\int_{\mathcal{O}}\left|\sum_{n=1}^{\infty}\psi_{0,n}e^{-\lambda_n t} \mathcal{N}_s\varphi_n(x)\right|^2dxdt.
\end{align}
By mean of the classical moment method (see e.g. \cite[Sections 2 and 3]{fattorini1971exact}), inequalities of the form \eqref{obs-ine-1} are well-known to be true if and only if the eigenvalues $\{\lambda_n\}_{n\in\NN}$ and eigenfunctions $\{\varphi_n\}_{n\in\NN}$ satisfy the following M\"{u}ntz condition:
\begin{align}\label{Lam}
\sum_{n=1}^\infty\frac{1}{\lambda_n}<\infty,
\end{align}
and 
\begin{align}\label{NOR}
\|\mathcal{N}_s\varphi_n\|_{L^2(\mathcal{O})}\ge \eta>0,\;\forall\;n\in\NN,
\end{align}
where the constant $\eta$ is independent of $n$.

Lemma \ref{uni-bound} implies that \eqref{NOR} holds. 
As we have mentioned in the introduction, the eigenvalues $\{\lambda_n\}_{n\ge 1}$ satisfy  \eqref{lam}. That is,
\begin{align}\label{lam1}
\lambda_n=\left(\frac{n\pi}{2}-\frac{(2-2s)\pi}{8}\right)^{2s}+O\left(\frac{1}{n}\right)\;\text{ as }\, n\to\infty.
\end{align}
Therefore, we easily see from \eqref{lam1} that the condition \eqref{Lam} is satisfied if and only if $\frac 12<s<1$. Instead, if $0<s\le \frac 12$, then the series diverges since it will have the behavior of the harmonic series. In conclusion, the observability inequality  \eqref{obs-ine} holds true when $\frac 12<s<1$, and it is false when $0<s\le \frac 12$. The proof of the theorem is finished.
\end{proof}

\noindent
{\bf Acknowledgment}: The authors would like to thank all the referees for their careful reading of the manuscript and their precise comments that have helped to improve the quality of the paper.

\bibliographystyle{abbrv}
\bibliography{biblio}

\end{document}